\theoremstyle{definition}
\theoremstyle{plain}
\date{}
\newtheorem{Thm}{Theorem}[section]
\newtheorem{Prop}[Thm]{Proposition}
\newtheorem{Lemma}[Thm]{Lemma}
\newcommand{\dis}{\displaystyle}
\newcommand{\norm}{\parallel}
\newcommand{\Z}{{\mathbb Z}}
\newcommand{\N}{{\mathbb N}}
\newcommand{\R}{{\mathbb R}}
\newcommand{\ep}{\varepsilon }
\newcommand{\2}{\frac{1}{2} }
\newcommand{\wto}{\rightharpoonup}
\newcommand{\nnorm}{|\!|\!|}
\newcommand{\D}{\mathcal{D}}
\def\text#1{\mbox{#1 }}
\title{\bf On convergence of Chorin's projection method to a Leray-Hopf weak solution}
\author{Hidesato Kuroki
\footnote{A part of this work was done, when he belonged to Department of Mathematics, Faculty of Science and Technology, Keio University, 3-14-1 Hiyoshi, Kohoku-ku, Yokohama, 223-8522, Japan. }
\,\, and \,\,  Kohei Soga
\footnote{Department of Mathematics, Faculty of Science and Technology, Keio University, 3-14-1 Hiyoshi, Kohoku-ku, Yokohama, 223-8522, Japan. E-mail:  soga@math.keio.ac.jp 
}}
\begin{document}
\maketitle
\begin{abstract} 
\noindent The projection method to solve the incompressible Navier-Stokes equations was first studied by Chorin [Math. Comp., 1969] in the framework of a finite difference method and Temam [Arch. Rational Mech. and Anal., 1969] in the framework of a finite element method. Chorin showed convergence of approximation and its error estimates in problems with the periodic boundary condition  assuming existence of a $C^5$-solution, while Temam demonstrated an abstract argument to obtain a Leray-Hopf weak solution in problems on a bounded domain with the no-slip boundary condition. In the present paper, the authors extend Chorin's result  with full details to obtain convergent finite difference approximation of a Leray-Hopf weak solution  to the incompressible Navier-Stokes equations  on an arbitrary bounded Lipschitz domain of $\R^3$  with the no-slip boundary condition and an external force. We prove  unconditional solvability of our implicit scheme and strong $L^2$-convergence (up to subsequence) under the scaling condition $ h^{3-\alpha}\le \tau$ (no upper bound is necessary), where $h,\tau$ are space, time discretization parameters, respectively, and $\alpha\in(0,2]$ is any fixed constant. The results contain  a  compactness method based on  a new interpolation inequality for step functions.           
 
\medskip

\noindent{\bf Keywords:}  incompressible Navier-Stokes equations; Leray-Hopf weak solution; projection method;  finite difference scheme  \medskip

\noindent{\bf AMS subject classifications:}   35Q30; 35D30; 65M06
\end{abstract}
%
\setcounter{section}{0}
\setcounter{equation}{0}
\section{Introduction}
We consider the incompressible Navier-Stokes equations on a bounded domain of $\R^3$
\begin{eqnarray}\label{NS}
 \left \{
\begin{array}{lll}
\,\,\,\,\, v_t&=& -(v\cdot \nabla)v +\Delta v+f -\nabla p\mbox{\quad in $(0,T]\times\Omega $,}
\medskip\\
\nabla\cdot v &=&0\mbox{\quad\quad\quad\quad\quad\quad\quad\quad\quad\quad\quad\,\,\, in $(0,T]\times\Omega$, \qquad\qquad\quad}
\medskip\\
v(0,\cdot)&=&v^0\mbox{\qquad\qquad\qquad\qquad\qquad\,\,\,\,\,\,\, in $\Omega$},
\medskip\\
\,\,\,\,\,v&=&0\mbox{\qquad\qquad\qquad\qquad\qquad\,\,\,\,\,\,\,\,\, on $\partial \Omega$},
\end{array}
\right.\\\nonumber  
\,\,\,\,\,\,\,\, \Omega\subset \R^3 \mbox{ is a bounded connected open set with a Lipschitz boundary,}
\end{eqnarray}
where $v=v(t,x)$ is the velocity, $p=p(t,x)$ is the pressure, $f=f(t,x)$ is a given external force, $T$ is an arbitrary positive number, $v^0$ is initial data and $v_t=\partial_t v$, $v_{x_j}=\partial_{x_j}v$, etc., stand for  the partial (weak) derivatives of $v(t,x)$. 
Let $f$ and $v^0$ be arbitrarily taken as  
$$\mbox{$f\in L^2_{loc}([0,\infty);L^2(\Omega)^3)$,\quad $v^0\in L^2_{\sigma}(\Omega)$}.$$ 
Here, $C^r_{0}(\Omega)$  is the family of $C^r$-functions\,:\,$\Omega\to\R$ with a compact support; $C^r_{0,\sigma}(\Omega):=\{v\in C^r_0(\Omega)^3\,|\,\nabla\cdot v=0\}$;  $H^1_0(\Omega)$ is the closure of $C^\infty_0(\Omega)$ with respect to the norm  $\norm \cdot\norm_{H^1(\Omega)}$; $L^2_{\sigma}(\Omega)$ (resp. $H^1_{0,\sigma}(\Omega)$) is the closure of $C^\infty_{0,\sigma}(\Omega)$ with respect to the norm $\norm\cdot\norm_{L^2(\Omega)^3}$ (resp. $\norm \cdot\norm_{H^1(\Omega)^3}$). 

A function $v=(v_1,v_2,v_3):[0,T]\times\Omega\to\R^3$ is called a {\it Leray-Hopf weak solution} of \eqref{NS}, if 
\begin{eqnarray}\nonumber
&&v\in L^\infty([0,T];L^2_{\sigma}(\Omega))\cap L^2([0,T];H^1_{0,\sigma}(\Omega)),\\\label{weak-form-NS}
&&-\int_\Omega v^0(x)\cdot\phi(0,x)dx- \int_0^T\int_\Omega v(t,x)\cdot \partial_t\phi(x,t)dxdt \\\nonumber
&&=-\sum_{j=1}^3\int_0^T\int_\Omega v_j(t,x)\partial_{x_j}v(t,x)\cdot\phi(t,x)dxdt\\\nonumber
&&\quad -\sum_{j=1}^3\int_0^T\int_\Omega\partial_{x_j}v(t,x)\cdot\partial_{x_j}\phi(t,x)dxdt\\\nonumber
&&\quad +\int_0^T\int_\Omega f(t,x)\cdot\phi(t,x)dxdt\quad \mbox{ for all $\phi\in C^\infty_0([-1,T);C^\infty_{0,\sigma}(\Omega))$,}
\end{eqnarray} 
where $x\cdot y:=\sum_{i=1}^3x_iy_i$ for $x,y\in\R^3$. Note that we have
$$-\sum_{j=1}^3\int_0^T\int_\Omega v_j(t,x)\partial_{x_j}v(t,x)\cdot\phi(t,x)dxdt= \sum_{j=1}^3 \int_0^T\int_\Omega    v_{ j}(t,x)v(t,x) \cdot  \partial_{x_j}\phi(t,x)    dxdt $$
for $v\in L^\infty([0,T];L^2_{\sigma}(\Omega))\cap L^2([0,T];H^1_{0,\sigma}(\Omega))$.
Since the notion of Leray-Hopf weak solutions was introduced, a vast amount of research and achievement has been made to understand properties of the solutions (see, e.g., \cite{Sohr} and \cite{Tsai} with references therein). In the huge literature on the Navier-Stokes equations, let us re-discuss how to prove the existence of a Leray-Hopf solution of \eqref{NS}.

The existence of a Leray-Hopf weak solution of (\ref{NS}) was first proved by Hopf \cite{Hopf} through the Galerkin approximation. After that, Ladyzhenskaya  \cite{Ladyzhenskaya} developed fully-discrete finite difference approximation (discrete in both time and space)  of a Leray-Hopf weak solution. She proposed several discretization schemes and their a priori estimates. Her idea is to directly discretize (\ref{NS}) with an implicit formulation including $p$ and the divergence-free constraint. To prove convergence of the approximation, it is  essential to verify not only weak convergence of a sequence of approximate solutions but also its strong convergence. It turns out that a proof of the strong convergence is a non-trivial delicate issue (Ladyzhenskaya shortly announces ideas in \cite{Ladyzhenskaya}, but there is no proof). Chorin \cite{Chorin} developed Ladyzhenskaya's idea by separating the divergence-free constraint from \eqref{NS}. His idea, which is called {\it Chorin's projection method}, is to introduce a discrete version of the Helmholtz-Hodge decomposition and to formulate a finite difference version of the projected Navier-Stokes equations 
\begin{eqnarray}\label{projected}
v_t=\mathcal{P}( -(v\cdot \nabla)v+\Delta v+f),     
\end{eqnarray}
where $\mathcal{P}$ is the Helmholtz-Hodge decomposition operator. We remark that Ladyzhenskaya's scheme and Chorin's are  not equivalent because of the implicit formulation of schemes and nonlinearity of  (\ref{NS}). Chorin showed convergence and error estimates of his scheme applied to problems  on a $3$-dimensional torus, assuming that there exists an exact solution in the $C^5$-class. The main ingredient of his convergence proof is the Taylor expansion of the exact $C^5$-solution. In \cite{Temam-1} and \cite{Temam-2}, Temam developed  fully-discrete approximation of (\ref{NS}) in a rather abstract framework of a finite element method, which yields a Leray-Hopf weak solution (he dealt with not only a version of Chorin's projection method but also an artificial compressibility method). He introduced a nice trilinear form to handle approximation of the nonlinear term in (\ref{NS}) to obtain suitable a priori $L^2$-estimates. Then, in order to prove strong convergence,  he exploited a compactness theorem (see Section 2 of Chapter 3  in \cite{Temam-book}), which requires  a sequence $\{v_m\}_{m\in\N}$ of approximate solutions to satisfy the estimate: for some constant $\gamma>0$ and $A>0$,   
\begin{eqnarray}\label{1-compact}
\int_\R|s|^{2\gamma} \norm(\mathcal{F}_t v_m)(s)\norm_{L^2(\Omega)^3}^2 ds \le A \mbox{\quad  for all $m\in\N$},
\end{eqnarray}
where $\mathcal{F}_t v_m$ stands for the Fourier transform with respect to $t$ of $v_m(t,x)$ extended by $0$ outside $[0,T]$ (this is an estimate of the fractional time-derivative). 

Chorin's or Temam's projection method has been further developed and frequently employed in numerical analysis of the incompressible Navier-Stokes equations: We refer to \cite{Rannacher} for the fact that Chorin's projection method can be interpreted as a pressure stabilization method; Chapter 3 in \cite{Temam-book} for semi-discrete  approximation (discrete in time and continuous in space) that is also an effective approach to construct a Leray-Hopf weak solution;   \cite{Shen} for error estimates in semi-discrete  approximation in the class of strong solutions; \cite{Guermond} for an abstract functional analytic treatment of finite element projection methods with error estimates  in the class of strong solutions.

In this paper, we show every detail of a fully-discrete finite difference method along Chorin's idea, obtaining a new elementary proof of the existence of a Leray-Hopf weak solution to \eqref{NS}. Unlike Temam's framework, we stick to finite difference equations directly derived from \eqref{NS} and \eqref{projected}. Our proof tells how  to solve the finite difference equations only by four basic arithmetic operations, where one can see in a very elementary way how the incompressible Navier-Stokes equations evolve as a Leray-Hopf weak solution.  
We prove that {\it our scheme (to be implicit) is unconditionally solvable and stable, and that the scheme is strongly convergent (up to subsequence) as the space, time discretization parameters $h,\tau$ tend to $0$ under the scaling condition  $h^{3-\alpha}\le\tau$ (no upper bound is necessary), where  $\alpha\in(0,2]$ is any fixed constant.} Following Chorin's construction of the discrete Helmholtz-Hodge decomposition on a torus, we obtain a discrete Helmholtz-Hodge decomposition operator $P_h$ on the grid $\Omega_h$ of $\Omega$ with the $0$-boundary condition both for the divergence free part and potential part. Our argument proceeds as 
\begin{enumerate}
\item The intermediate velocity $u^{n+\2}:\Omega_h\to\R^3$ ($n=0,1,\ldots$ is the time index) is obtained by the discrete Navier-Stokes equations on $\Omega_h$ with the boundary condition $u^{n+\2}=0$ on $\partial \Omega_h$,
\item The end-of-step velocity $u^{n+1}:\Omega_h\to\R^3$ is defined  as $u^{n+1}:=P_hu^{n+\2}$ with a discrete Helmholtz-Hodge decomposition operator $P_h$.
\item The step functions $u_\delta,v_\delta:[0,T]\times\Omega\to\R^3$,  $\delta=(h,\tau)$ obtained by $u^n, u^{n+\2}$, respectively,  converge weakly to a common  function $v\in L^2([0,T];H^1_{0,\sigma}(\Omega))$.
\item $v_\delta$ (NOT $u_\delta$) converges strongly to $v$, and $v$ is a Leray-Hopf solution.   
\end{enumerate}    
In order to prove strong convergence, we introduce  a simple compactness method that works essentially with estimates for weak convergence (no additional estimate like  \eqref{1-compact} is necessary). 
 Our method (Lemma 6.1 and Theorem 6.2 in  Section 6) is based on an interpolation inequality for a sequence of step functions, which is seen as a discrete version of  the following well-known compactness   (see, e.g.,  Lemma 2.1 and Theorem 2.1 of Chapter 3 in \cite{Temam-book} for more abstract statements known as the Aubin-Lions lemma;  see also \cite{GL} and \cite{Gallouet1} for discrete analogues of the Aubin-Lions lemma): 
\begin{Prop}\label{1111}
\begin{enumerate}
\item For each $\eta>0$, there exists a constant $c_\eta$ such that 
\begin{eqnarray}\label{star1}
\norm u\norm_{L^2(\Omega)^3}\le \eta \norm u\norm_{H^1(\Omega)^3}+c_\eta\norm u\norm_{H^1(\Omega)^3{}^\ast}\quad\mbox{for all $u\in H^1_{0,\sigma}(\Omega)$},
\end{eqnarray}
where $H^1(\Omega)^3{}^\ast$ is the dual space of $H^1(\Omega)^3$.
\item   Suppose that $\{v_m\}_{m\in\N}$ is a bounded sequence of $L^\alpha([0,T];H^1_{0,\sigma}(\Omega))$ and  $\{\partial_tv_m\}_{m\in\N}$ is a bounded sequence of $L^\beta([0,T];H^1_{0,\sigma}(\Omega)^\ast)$ for some $1<\alpha,\beta<\infty$. Then, $\{v_m\}_{m\in\N}$ contains a subsequence that is convergent in $L^\alpha([0,T];L^2(\Omega)^3)$.
\end{enumerate}
\end{Prop}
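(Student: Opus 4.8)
The plan is to prove (i) by a one-line duality estimate and then to obtain (ii) from (i) by the classical Aubin--Lions compactness argument. For (i): given $u\in H^1_{0,\sigma}(\Omega)$ we have $u\in L^2(\Omega)^3$, so $u$ induces a functional on $H^1(\Omega)^3$ via $\phi\mapsto\int_\Omega u\cdot\phi\,dx$ and, by the definition of the inclusion $L^2(\Omega)^3\hookrightarrow H^1(\Omega)^{3\ast}$, $\|u\|_{H^1(\Omega)^{3\ast}}=\sup\{\,|\int_\Omega u\cdot\phi\,dx|:\phi\in H^1(\Omega)^3,\ \|\phi\|_{H^1(\Omega)^3}\le1\,\}$. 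Testing against $\phi=u$ gives $\|u\|_{L^2(\Omega)^3}^2\le\|u\|_{H^1(\Omega)^{3\ast}}\|u\|_{H^1(\Omega)^3}$, and the weighted Young inequality $ab\le\tfrac{\lambda}{2}a^2+\tfrac{1}{2\lambda}b^2$ combined with $\sqrt{x+y}\le\sqrt x+\sqrt y$ yields \eqref{star1} with $c_\eta=1/(2\eta)$ upon choosing $\lambda=2\eta^2$. (Equivalently, \eqref{star1} is the Ehrling lemma for the chain $H^1_{0,\sigma}(\Omega)\hookrightarrow\hookrightarrow L^2(\Omega)^3\hookrightarrow H^1(\Omega)^{3\ast}$, provable by contradiction from the Rellich--Kondrachov compactness $H^1_0(\Omega)\hookrightarrow\hookrightarrow L^2(\Omega)$, valid on any bounded $\Omega$.) The same computation with the pairing between $H^1_{0,\sigma}(\Omega)$ and its own dual gives the slightly stronger inequality with $\|u\|_{H^1_{0,\sigma}(\Omega)^\ast}$ in place of $\|u\|_{H^1(\Omega)^{3\ast}}$; this is the form used in the proof of (ii).

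\emph{Setting up (ii).} Put $X:=H^1_{0,\sigma}(\Omega)$, a Hilbert space, so $X$ and $X^\ast$ are reflexive and the spaces $L^\alpha([0,T];X)$ and $L^\beta([0,T];X^\ast)$ are reflexive for $1<\alpha,\beta<\infty$. Extracting a subsequence, $v_m\rightharpoonup v$ in $L^\alpha([0,T];X)$ and (using the bound on $\{\partial_t v_m\}$ and identification of the limit by testing against $\varphi(t)g$ with $\varphi\in C_0^\infty((0,T))$, $g\in X$) $\partial_t v_m\rightharpoonup\partial_t v$ in $L^\beta([0,T];X^\ast)$. Replacing $v_m$ by $v_m-v$, it suffices to prove: if $v_m\rightharpoonup0$ in $L^\alpha([0,T];X)$, $\partial_t v_m\rightharpoonup0$ in $L^\beta([0,T];X^\ast)$ and both sequences are bounded (say by $M$), then $v_m\to0$ in $L^\alpha([0,T];L^2(\Omega)^3)$.

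\emph{The compactness step.} Each $v_m$ has a representative in $C([0,T];X^\ast)$ with $v_m(t)-v_m(s)=\int_s^t\partial_t v_m$, so H\"older's inequality gives $\|v_m(t)-v_m(s)\|_{X^\ast}\le M\,|t-s|^{1-1/\beta}$, i.e. $\{v_m\}$ is uniformly equicontinuous into $X^\ast$. For fixed $t_0$ and small $h>0$, decompose $v_m(t_0)=\tfrac1h\int_{t_0}^{t_0+h}v_m(t)\,dt-\tfrac1h\int_{t_0}^{t_0+h}(v_m(t)-v_m(t_0))\,dt$: the first term is bounded in $X$ by $h^{-1/\alpha}M$ hence lies in a set relatively compact in $X^\ast$ (the embedding $X\hookrightarrow\hookrightarrow X^\ast$ being Rellich--Kondrachov $X\hookrightarrow\hookrightarrow L^2_\sigma(\Omega)$ followed by $L^2_\sigma(\Omega)\hookrightarrow X^\ast$), while the second term has $X^\ast$-norm $\le C_\beta M h^{1-1/\beta}\to0$ uniformly in $m$; hence $\{v_m(t_0)\}_m$ is totally bounded, thus relatively compact, in $X^\ast$. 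Arzel\`a--Ascoli then produces a subsequence converging in $C([0,T];X^\ast)$, whose limit is $0$ since $v_m\rightharpoonup0$ also in $L^1([0,T];X^\ast)$; as the same holds for every subsequence, $v_m\to0$ in $C([0,T];X^\ast)$. Now apply (i) pointwise in $t$ and integrate: $\|v_m\|_{L^\alpha([0,T];L^2)}^\alpha\le C_\alpha\eta^\alpha M^\alpha+C_\alpha c_\eta^\alpha T\,\|v_m\|_{C([0,T];X^\ast)}^\alpha$, so letting $m\to\infty$ and then $\eta\to0$ gives the claim.

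\emph{Main obstacle.} Part (i) and the weak-compactness extractions are routine; the substance is the compactness step — the strong convergence $v_m\to0$ in $C([0,T];X^\ast)$ — which requires gluing together the time-equicontinuity furnished by the $\partial_t$-bound with the pointwise-in-time precompactness furnished by the $L^\alpha([0,T];X)$-bound and the compact Sobolev embedding, by means of the short-time averaging device, so that Arzel\`a--Ascoli applies. One should also keep straight the compatibility of the dual spaces $H^1(\Omega)^{3\ast}$ in (i) and $H^1_{0,\sigma}(\Omega)^\ast$ in (ii), which is why the proof of (i) is noted to work verbatim for the latter.
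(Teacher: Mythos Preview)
The paper does not prove Proposition~\ref{1111}: it is quoted as background (``well-known compactness'') with a reference to Temam's book, and the paper's own contribution is the \emph{discrete} analogue developed in Lemma~\ref{key-lemma} and Theorem~\ref{strong-convergence}. So there is no ``paper's proof'' to compare against.

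Your argument is correct and is essentially the standard route. Part~(i) via the pivot-space duality $\|u\|_{L^2}^2\le\|u\|_{H^1}\|u\|_{(H^1)^\ast}$ plus Young is clean, and your parenthetical Ehrling-by-contradiction remark is the version Temam actually records. Part~(ii) is the classical Aubin--Lions proof: equicontinuity in $X^\ast$ from the $\partial_t$-bound, pointwise precompactness via the short-time averaging decomposition, Arzel\`a--Ascoli, then the interpolation inequality. One cosmetic point: in the last display you need only $\|v_m\|_{L^\alpha([0,T];X^\ast)}$, not the $C([0,T];X^\ast)$ norm, so you could equally conclude from strong $L^\alpha$-convergence in $X^\ast$; the $C([0,T];X^\ast)$ statement is stronger than needed but of course sufficient. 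Your remark about the two duals ($H^1(\Omega)^{3\ast}$ versus $H^1_{0,\sigma}(\Omega)^\ast$) is well placed --- the argument for (i) is indeed insensitive to which of the two you use, since the test function $\phi=u$ lies in $H^1_{0,\sigma}(\Omega)$ anyway.
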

\noindent In compactness arguments for discrete problems,  the essential points are how to choose discrete analogue of $H^1_{0,\sigma}(\Omega)$ and how to estimate the discrete time derivative, which depends highly on schemes. In our case, difficulty is that functions of the discrete divergence free constraint do not belong to $H^1_{0,\sigma}(\Omega)$ and the constraint varies along with the mesh size; Moreover, we face the absence of discrete $L^2([0,T];H^1(\Omega)^3)$-bound  of $u^n$ (this is because $P_h$ and discrete differentiation are not commutative). Nevertheless, we manage to prove strong convergence of the intermediate velocity $u^{n+\2}$ by means of the facts that $u^{n+\2}$ has discrete  $L^2([0,T];H^1(\Omega)^3)$-bound and that   $u^{n+\2}$ is asymptotically divergence free as $h,\tau\to0$ even though it is not discrete divergence free.   We show a sequence-wise discrete version of \eqref{star1} (i.e., we look for $c_\eta$ for each given sequence).  Thanks to the highly specialized form of our interpolation inequality, the discrete time derivative can be easily estimated through  our difference equations and estimates for weak convergence. It seems that compactness argument with our interpolation inequality is widely applicable to a proof of convergence  of fully discrete numerical schemes.

We assume that the boundary of $\Omega$ is Lipschitz, because we need to use the following fact (see, e.g., Theorem 1.6 and Remark 1.7  of Chapter 1 in \cite{Temam-book}):   
\begin{Prop}\label{1key-lemma}
Let $\Omega\subset\R^3$ be a bounded open set with a Lipschitz boundary $\partial\Omega$. Then, $H^1_{0,\sigma}(\Omega)$ coincides with $\tilde{H}^1_{0,\sigma}(\Omega):=\{v\in H^1_0(\Omega)^3\,|\,\nabla\cdot v=0\}$.       
\end{Prop}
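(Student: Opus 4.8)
The plan is to prove the two inclusions separately. The inclusion $H^1_{0,\sigma}(\Omega)\subseteq\tilde{H}^1_{0,\sigma}(\Omega)$ is immediate: $H^1_0(\Omega)^3$ is closed in $H^1(\Omega)^3$ and the divergence $\nabla\cdot\colon H^1(\Omega)^3\to L^2(\Omega)$ is bounded, so $\tilde{H}^1_{0,\sigma}(\Omega)=\{v\in H^1_0(\Omega)^3\,|\,\nabla\cdot v=0\}$ is a closed subspace of $H^1(\Omega)^3$ that contains $C^\infty_{0,\sigma}(\Omega)$, hence it contains the $H^1$-closure $H^1_{0,\sigma}(\Omega)$.

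For the reverse inclusion I would not build approximating fields directly but argue by duality. Both $H^1_{0,\sigma}(\Omega)$ and $\tilde{H}^1_{0,\sigma}(\Omega)$ are closed subspaces of the Hilbert space $H^1_0(\Omega)^3$ with $H^1_{0,\sigma}(\Omega)\subseteq\tilde{H}^1_{0,\sigma}(\Omega)$, so it suffices to show that they have the same annihilator in $(H^1_0(\Omega)^3)^\ast=H^{-1}(\Omega)^3$; one inclusion of annihilators is automatic, so the real task is: if $F\in H^{-1}(\Omega)^3$ annihilates $C^\infty_{0,\sigma}(\Omega)$ (which is dense in $H^1_{0,\sigma}(\Omega)$), then $F$ annihilates every $w\in\tilde{H}^1_{0,\sigma}(\Omega)$. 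By the de Rham lemma there is a distribution $p$ on $\Omega$ with $F=\nabla p$; since $F\in H^{-1}(\Omega)^3$ and $\Omega$ is a bounded Lipschitz domain, the Ne\v{c}as inequality $\|p-\bar p\|_{L^2(\Omega)}\le C\|\nabla p\|_{H^{-1}(\Omega)^3}$ (equivalently, surjectivity of $\nabla\cdot\colon H^1_0(\Omega)^3\to L^2(\Omega)$ onto its mean-zero subspace) upgrades $p$ to an element of $L^2(\Omega)$, after normalizing the mean and using connectedness of $\Omega$. Then for any $w\in\tilde{H}^1_{0,\sigma}(\Omega)\subseteq H^1_0(\Omega)^3$ one has $\langle F,w\rangle=\langle\nabla p,w\rangle=-\int_\Omega p\,(\nabla\cdot w)\,dx=0$, the integration by parts being legitimate because $p\in L^2(\Omega)$ and $w\in H^1_0(\Omega)^3$. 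Hence the two annihilators coincide, and taking annihilators once more (for closed subspaces of a Hilbert space) gives $H^1_{0,\sigma}(\Omega)=\tilde{H}^1_{0,\sigma}(\Omega)$.

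The only nontrivial ingredient, and the step where the Lipschitz hypothesis is genuinely used, is the pressure lemma producing $p\in L^2(\Omega)$: the purely distributional statement $F=\nabla p$ holds on any connected open set, but the upgrade to $L^2$ rests on the Ne\v{c}as--Bogovskii estimate, which I would quote from Chapter I of \cite{Temam-book} rather than reprove (it is obtained there via a partition of unity reducing $\Omega$ to pieces star-shaped with respect to a ball, on which an explicit Bogovskii integral operator solves $\nabla\cdot u=g$ with an $H^1_0$-bound). A constructive alternative, closer to the elementary spirit of this paper, would be: given $v\in\tilde{H}^1_{0,\sigma}(\Omega)$, cut it with a partition of unity subordinate to boundary charts, translate each boundary piece in the chart's interior direction --- for a region lying above a Lipschitz graph, a translation by $t$ pushes the support a distance $\gtrsim t$ away from $\partial\Omega$ --- and mollify, obtaining a smooth field supported compactly in $\Omega$ with $L^2$-small divergence; then subtract a Bogovskii correction on a smooth subdomain containing that support to make the field exactly divergence free while keeping its support compact in $\Omega$, and mollify once more to land in $C^\infty_{0,\sigma}(\Omega)$. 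Either way, it is precisely the inward-translation / Bogovskii step that fails on non-Lipschitz domains.
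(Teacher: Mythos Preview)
The paper does not give its own proof of this proposition: it is stated as a known fact and attributed to Theorem~1.6 and Remark~1.7 of Chapter~1 in \cite{Temam-book}. Your duality argument via the de~Rham lemma and the Ne\v{c}as/Bogovskii pressure estimate is correct, and it is essentially the classical proof one finds in that reference, so there is no conflict---you have simply supplied what the paper chose to cite rather than reprove. The constructive alternative you sketch (inward translation in Lipschitz boundary charts followed by a Bogovskii correction) is also standard and would work equally well; both routes hinge on the same Lipschitz-dependent ingredient, namely the solvability of $\nabla\cdot u=g$ in $H^1_0(\Omega)^3$ with bounds.
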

\noindent Unlike Galerkin approximation, our method constructs a Leray-Hopf solution by a limit of sequences of  step functions compactly  supported in $\Omega$ with a discrete  divergence-free constraint. Hence,  the direct  consequence about regularity is that the limit function belongs to $L^2([0,T];\tilde{H}^1_{0,\sigma}(\Omega))\cap L^\infty([0,T];L^2(\Omega)^3)$. If $\Omega$ is a bounded Lipschitz domain, Proposition \ref{1key-lemma} yields the necessary regularity for the limit to be a Leray-Hopf solution. 

Chorin uses the central difference to define the  discrete gradient and divergence at each grid point, which is convenient to obtain higher accuracy through the Taylor expansion. We use  the forward difference for the gradient and the backward difference for the divergence (see Section 2), which simplifies the whole argument (e.g., this allows us to have  the $0$-boundary condition to the potential part instead of the $0$-mean condition in our  discrete Helmholtz-Hodge decomposition). Minor modification of our argument yields similar results with the central difference. 


We briefly discuss outlook of our result and related recent works. First of all we remark that the purpose of mathematical analysis of numerical methods is not only to provide actual computational techniques in practical situations but also to provide mathematical tools to establish rigorous results such as existence and uniqueness of solutions for (highly) nonlinear problems. We summarize the following three points as outlook of numerical analysis of the Navier-Stokes equations:        
\begin{itemize}
\item[(i)] In principle, since uniqueness of a Leray-Hopf weak solution is an open problem, a new existence proof might imply new knowledge on Leray-Hopf weak solutions, namely a new method might or might  not capture a Leray-Hopf weak solution which is different from the ones obtained by Hopf,  Temam or Caffarelli-Kohn-Nirenberg in \cite{CKN} (cf. issues on the (partial) regularity of Leray-Hopf weak solutions),       
\item[(ii)] Due to its simple structure, discretization methods could provide  new insight into (\ref{NS}) which is not visible from  purely analytical methods, 
\item[(iii)] Mathematical analysis on numerical methods  WITHOUT assuming existence of any exact solution is an effective approach to establish mathematical or computational theories of more complicated Navier-Stokes systems such as incompressible or compressible systems with other kinds of boundary condition,  a free surface, multiple phases, multiple species (flow of mixture), etc.    
\end{itemize}  
Related to (i) and (ii), we refer to \cite{Gallouet1} for another way to construct a Leray-Hopf weak solution, where it is proved that the MAC scheme applied to (\ref{NS}) ($\Omega$ is restricted to  parallelepipeds there) is convergent to a Leray-Hopf weak solution. In \cite{Guillod}, the uniqueness of Leray-Hopf solutions  is investigated through a numerical approach, though more substantial analysis is required to upgrade the result to a computer-assisted proof.  Related to (iii), we refer to \cite{Karper}, \cite{Feireisl2} and \cite{Gallouet2} for convergence proofs of numerical schemes applied to the compressible Navier-Stokes equations in the class of weak solutions and   \cite{Feireisl4} for that in the class of measure-valued solutions; \cite{Feireisl1} and \cite{Feireisl3} for numerical methods of the  full Navier-Stokes-Fourier system.

In Section 2, we give a discrete Helmholtz-Hodge decomposition operator. In Section 3, we discretize \eqref{NS} with the decomposition operator in an implicit form and prove unconditional  solvability of the discrete problem. In Section 4, we show a priori $L^2$-estimates. In Section 5, we discuss weak convergence of the difference solutions. In Section 6, we demonstrate a new interpolation inequality and  prove strong convergence by means of  weak convergence. In Section 7, we conclude the paper by proving that the difference solutions converge to a Leary-Hopf weak solution (up to subsequence). In Appendix, we show  continuous interpolation of a function defined on a grid and  a discrete Poincar\'e type inequality. 
\medskip

\noindent{\bf Acknowledgement.}  The second author is supported by JSPS Grant-in-aid for Young Scientists (B) \#15K21369 and JSPS Grant-in-aid for Young Scientists \#18K13443.
\setcounter{section}{1}
\setcounter{equation}{0}
\section{Discrete Helmholtz-Hodge decomposition}

Consider the grid $h\Z^3:=\{ (hz_1,hz_2,hz_3)\,|\,z_1,z_2,z_3\in\Z \}$ with the mesh size $h>0$. Let $e^1,e^2,e^3$ be the standard basis of $\R^3$. 
The boundary of  $B\subset h\Z^3$ is defined as $\partial B:=\{ x\in B\,|\,\{x\pm he^i\,|\,i=1,2,3\}\not\subset B \}$. 

Let $\Omega\subset$ be a bounded, open, connected subset of $\R^3$ with a Lipschitz boundary $\partial\Omega$. Set 
$$C_h(x):=[x_1-\frac{h}{2},x_1+\frac{h}{2})\times[x_2-\frac{h}{2},x_2+\frac{h}{2})\times[x_3-\frac{h}{2},x_3+\frac{h}{2}).$$
  Our discretization of (\ref{NS}) will take place on the set 
$$\Omega_h:=\{ x\in \Omega\cap h\Z^3\,|\,\,\,\,C_{4h}(x)\subset \Omega\}.$$ 
Define the discrete derivatives of a function $\phi:\Omega_h\to\R$ as
\begin{eqnarray*}
&&D_i^+\phi(x):=\frac{\phi(x+he^i)-\phi(x)}{h},\,\,\,D_i^-\phi(x):=\frac{\phi(x)-\phi(x-he^i)}{h},\\ 
&&D_i^2\phi(x):=\frac{\phi(x+he^i)+\phi(x-he^i)-2\phi(x)}{h^2}
\end{eqnarray*}  
for each $x\in \Omega_h$,  where {\it we always assume that $\phi$ is extended to be $0$ outside $\Omega_h$, i.e., $\phi(x+ he^i)=0$ (resp. $\phi(x- he^i)=0$) if $x+ he^i\not\in \Omega_h$ (resp. $\phi(x- he^i)=0$)}. For $x,y\in\R^d$, set $x\cdot y:=\sum_{i=1}^dx_iy_i$, $|x|:=\sqrt{x\cdot x}$. Define the discrete gradient  $\mathcal{D}$ and the discrete divergence $\mathcal{D}\cdot$ for  functions $\phi:\Omega_h\to\R$ and $w=(w_1,w_2,w_3):\Omega_h\to\R^3$ with $0$-extension as 
$$\D\phi(x):=(D_1^+\phi(x),D_2^+\phi(x),D_3^+\phi(x)),\,\,\,\D\cdot w(x):=D_1^-w_1(x)+D_2^-w_2(x)+D_3^-w_3(x)$$
for each $x\in \Omega_h$. Note that Chorin \cite{Chorin} uses the central difference $\frac{\phi(x+he^i)-\phi(x-he^i)}{2h}$ for the discrete gradient and divergence.     
\begin{Lemma}\label{vector-cal}
For $\phi:\Omega_h\to\R$ and $w:\Omega_h\to\R^3$ with $\phi|_{\partial \Omega_h}=0$ and $w|_{\partial \Omega_h}=0$, we have 
\begin{eqnarray*}
\sum_{x\in \Omega_h\setminus \partial \Omega_h}w(x) \cdot \D\phi(x)=-\sum_{x\in \Omega_h\setminus \partial\Omega_h}(\D\cdot w(x) ) \phi(x).
\end{eqnarray*}
\end{Lemma}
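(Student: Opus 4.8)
The plan is to reduce the identity to a one–dimensional summation–by–parts fact applied coordinate by coordinate. Since $w(x)\cdot\D\phi(x)=\sum_{i=1}^3 w_i(x)D_i^+\phi(x)$ and $\D\cdot w(x)=\sum_{i=1}^3 D_i^-w_i(x)$, it suffices to prove, for each fixed $i\in\{1,2,3\}$,
\[
\sum_{x\in\Omega_h\setminus\partial\Omega_h}w_i(x)\,D_i^+\phi(x)=-\sum_{x\in\Omega_h\setminus\partial\Omega_h}\bigl(D_i^-w_i(x)\bigr)\phi(x),
\]
and then sum over $i=1,2,3$.

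First I would enlarge the range of summation from $\Omega_h\setminus\partial\Omega_h$ to all of $h\Z^3$. On the left–hand side each summand equals $w_i(x)\bigl(\phi(x+he^i)-\phi(x)\bigr)/h$, which is zero whenever $w_i(x)=0$; by the standing $0$–extension convention $w_i$ vanishes outside $\Omega_h$, and by hypothesis $w|_{\partial\Omega_h}=0$, so every newly included term vanishes. Dually, each summand on the right–hand side equals $\bigl(w_i(x)-w_i(x-he^i)\bigr)\phi(x)/h$, which is zero whenever $\phi(x)=0$, hence outside $\Omega_h$ and on $\partial\Omega_h$; so that sum too is unchanged. All these sums are finite because $\Omega_h$ is a finite set.

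Then I would carry out the translation of the summation index on the full lattice: the reindexing $\sum_{x\in h\Z^3}w_i(x)\phi(x+he^i)=\sum_{x\in h\Z^3}w_i(x-he^i)\phi(x)$ is a bijective change of variables in a finite sum, so
\[
\sum_{x\in h\Z^3}w_i(x)\,D_i^+\phi(x)=\frac1h\sum_{x\in h\Z^3}\bigl(w_i(x-he^i)-w_i(x)\bigr)\phi(x)=-\sum_{x\in h\Z^3}\bigl(D_i^-w_i(x)\bigr)\phi(x).
\]
Restricting both sides back to $\Omega_h\setminus\partial\Omega_h$ — discarding, once more, terms that vanish for the reasons just given — yields the coordinatewise identity, and summing over $i$ gives the lemma. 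There is no genuine analytic difficulty; the only point needing care is the interaction between the discrete operators $D_i^\pm$, which reach one grid step outside $\Omega_h\setminus\partial\Omega_h$, and the $0$–extension and boundary conditions. The key observation that makes the extension of the summation range legitimate is that $w_i(x)D_i^+\phi(x)$ vanishes as soon as $w_i(x)=0$ (no hypothesis on $\phi$ at $x$ is needed) and, symmetrically, that $\bigl(D_i^-w_i(x)\bigr)\phi(x)$ vanishes as soon as $\phi(x)=0$.
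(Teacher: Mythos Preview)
Your proof is correct and follows essentially the same summation-by-parts argument as the paper: expand the dot product componentwise, use the $0$-boundary conditions and $0$-extension to justify the index shift $x\mapsto x-he^i$, and regroup. The paper carries out the shift in a single line directly on $\Omega_h\setminus\partial\Omega_h$, whereas you make the justification more explicit by first extending to all of $h\Z^3$ and then restricting back; this is a presentational difference only.
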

\begin{proof}
Due to the boundary condition of $w$ and $\phi$, we have   
\begin{eqnarray*}
\sum_{x\in \Omega_h\setminus \partial\Omega_h}w(x) \cdot \D\phi(x)&=&\sum_{x\in \Omega_h\setminus \partial\Omega_h}\sum_{i=1}^3w_i(x)\frac{\phi(x+he^i)-\phi(x)}{h}\\
&=&\sum_{x\in \Omega_h\setminus \partial\Omega_h}\sum_{i=1}^3\frac{1}{h}\big(w_i(x-he^i)\phi(x)-w_i(x)\phi(x)\big)\\
&=&-\sum_{x\in \Omega_h\setminus \partial\Omega_h}(\D\cdot w(x) ) \phi(x).
\end{eqnarray*}
\end{proof}
We consider decomposition of a function $u:\Omega_h\to\R^3$ of the form $u=w+\D\phi$ with $\D\cdot w=0$. In our decomposition, we ask the $0$-boundary condition also to $\phi$ instead of the $0$-mean condition.    
\begin{Thm}\label{Projection}
For each function  $u:\Omega_h\to\R^3$, there exist unique functions $w:\Omega_h\to\R^3$ and  $\phi:\Omega_h\to\R$ such that 
\begin{eqnarray}\label{HHD}
&&\D\cdot w=0,\quad 
w+\D \phi =u \mbox{ on $\Omega_h\setminus \partial\Omega_h$};\quad w=0,\quad  \phi=0 \mbox{ on $\partial\Omega_h$},
\end{eqnarray}
where $u$ does not necessarily need to vanish on $\partial \Omega_h$. 
\end{Thm}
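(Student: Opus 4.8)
The plan is to reformulate the decomposition \eqref{HHD} as a linear system for $\phi$ alone and then solve it by a finite-dimensional variational (energy minimization) argument. First I would observe that the number $N:=\#(\Omega_h\setminus\partial\Omega_h)$ of interior grid points is finite, so the space $V:=\{\phi:\Omega_h\to\R\,|\,\phi=0\text{ on }\partial\Omega_h\}$ is a finite-dimensional Euclidean space ($\cong\R^N$), and likewise $W:=\{w:\Omega_h\to\R^3\,|\,w=0\text{ on }\partial\Omega_h\}$. Given $u$, set $w:=u-\D\phi$ on $\Omega_h\setminus\partial\Omega_h$ and $w:=0$ on $\partial\Omega_h$; then \eqref{HHD} reduces to the single requirement $\D\cdot(u-\D\phi)=0$ on $\Omega_h\setminus\partial\Omega_h$, i.e. the discrete Poisson problem
\begin{eqnarray*}
\D\cdot\D\phi=\D\cdot u\on\Omega_h\setminus\partial\Omega_h,\qquad \phi=0\on\partial\Omega_h,
\end{eqnarray*}
where $u$ in $\D\cdot u$ is understood with $0$-extension outside $\Omega_h$. (Here I must be slightly careful that $w=u-\D\phi$ is being defined only at interior points, and that the $0$-extension conventions used to evaluate $\D\cdot w$ at points adjacent to $\partial\Omega_h$ match the conventions in the statement; this bookkeeping is routine but must be checked.)

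Next I would solve the discrete Poisson problem variationally. Consider the quadratic functional $J(\phi):=\tfrac12\sum_{x\in\Omega_h\setminus\partial\Omega_h}|\D\phi(x)|^2-\sum_{x\in\Omega_h\setminus\partial\Omega_h}u(x)\cdot\D\phi(x)$ on the finite-dimensional space $V$. It is a continuous, coercive, strictly convex function once we know that $\phi\mapsto\sum_{x}|\D\phi(x)|^2$ is positive definite on $V$, hence it attains a unique minimizer, whose Euler-Lagrange equation is exactly $\sum_x\D\phi(x)\cdot\D\psi(x)=\sum_x u(x)\cdot\D\psi(x)$ for all $\psi\in V$; by Lemma \ref{vector-cal} (applied to the pair $\D\phi$ and $\psi$, both vanishing on $\partial\Omega_h$) this is equivalent to $-\sum_x(\D\cdot\D\phi(x))\psi(x)=-\sum_x(\D\cdot u(x))\psi(x)$ for all $\psi$, i.e. the pointwise Poisson equation above. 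Uniqueness of $\phi$ gives uniqueness of $w=u-\D\phi$, and conversely any solution pair $(w,\phi)$ of \eqref{HHD} forces $\phi$ to solve the variational problem (pair the equation $w+\D\phi=u$ with $\D\psi$, use $\D\cdot w=0$ and Lemma \ref{vector-cal}), so the decomposition is unique.

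The one genuine point to nail down — the main obstacle — is the \emph{positive-definiteness} of the discrete Dirichlet form, equivalently the implication $\D\phi\equiv0$ on $\Omega_h\setminus\partial\Omega_h$ with $\phi=0$ on $\partial\Omega_h$ $\Rightarrow$ $\phi\equiv0$. This is a discrete Poincaré-type statement; the excerpt promises exactly such an inequality in the Appendix, so I would invoke it. Absent that, one can argue directly: $\D\phi(x)=0$ means $\phi(x+he^i)=\phi(x)$ for every interior $x$ and every $i$ (with the $0$-extension, so $\phi(x+he^i)=0$ when $x+he^i\notin\Omega_h$); walking along grid lines and using connectedness of $\Omega$ (which makes $\Omega_h$ "grid-connected" for $h$ small, another routine geometric check using the Lipschitz boundary and the definition of $\Omega_h$ via $C_{4h}(x)\subset\Omega$) together with the fact that every grid line through an interior point eventually exits $\Omega_h$ and picks up a $0$ value, one propagates $\phi\equiv0$. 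Once positive-definiteness is in hand the rest is the standard finite-dimensional variational argument sketched above, with no analytic subtleties. I would present the positivity lemma (or cite the Appendix), then the variational existence/uniqueness, then the equivalence with \eqref{HHD}, in that order.
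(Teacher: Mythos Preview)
Your approach is sound and genuinely different from the paper's. The paper keeps $w$ and $\phi$ coupled: it writes \eqref{HHD} as a single $4a\times 4a$ linear system $Ay=\alpha$ (with $a=\#(\Omega_h\setminus\partial\Omega_h)$) and proves $A$ invertible by checking that $Ay=0$ forces $y=0$. For $u\equiv 0$, Lemma~\ref{vector-cal} gives $\sum_{\Omega_h\setminus\partial\Omega_h} w\cdot\D\phi=0$, whence $\sum|w|^2=\sum|\D\phi|^2=0$; then $\D\phi\equiv 0$ together with $\phi|_{\partial\Omega_h}=0$ forces $\phi\equiv 0$ (precisely the positivity fact you isolate). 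Your route --- eliminate $w$, reduce to a scalar discrete Poisson problem, solve by minimizing the Dirichlet energy --- is the classical PDE viewpoint and makes the link to the continuous Helmholtz decomposition explicit; the paper's route is a bit shorter since one orthogonality computation disposes of both unknowns at once.

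One correction before you finalize: Lemma~\ref{vector-cal} requires the \emph{vector} argument to vanish on $\partial\Omega_h$, and neither $\D\phi$ nor $u$ does (at $y\in\partial\Omega_h$ with $y+he^i$ interior, $D_i^+\phi(y)=\phi(y+he^i)/h$ need not be zero). So your passage from the weak Euler--Lagrange identity to the pointwise equation $\D\cdot\D\phi=\D\cdot u$ is not valid at interior points adjacent to $\partial\Omega_h$, and this is not mere bookkeeping. The clean fix is to bypass the pointwise Poisson equation: the variational identity $\sum_{\rm int}\D\phi\cdot\D\psi=\sum_{\rm int}u\cdot\D\psi$ reads $\sum_{\rm int}w\cdot\D\psi=0$ for all $\psi\in V$, where $w$ is your candidate ($=u-\D\phi$ on the interior, $=0$ on $\partial\Omega_h$); now Lemma~\ref{vector-cal} \emph{does} apply to the pair $(w,\psi)$ and yields $\D\cdot w\equiv 0$ on the interior directly. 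Finally, the grid-connectedness detour is unnecessary: from any interior $x$, each step in the $+e^1$ direction stays in $\Omega_h$ (by definition of interior) until boundedness of $\Omega$ forces a hit on $\partial\Omega_h$, where $\phi=0$, and constancy along the walk gives $\phi(x)=0$ --- this is exactly the Appendix argument.
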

\begin{proof}
Our argument will show how to construct $w$ and $\phi$. We label each point of $\Omega_h\setminus\partial\Omega_h$ as $ \Omega_h\setminus\partial\Omega_h=\{x^{1},x^{2},\ldots,x^{a}  \}$. Introduce $y,\alpha\in\R^{4a}$ as  
\begin{eqnarray*}
y&=&\big(w_1(x^{1}),\ldots,w_1(x^{a}),w_2(x^{1}),\ldots,w_2(x^{a}),w_3(x^{1}),\ldots,w_3(x^{a}), \phi(x^1),\ldots,\phi(x^{a})   \big),\\
\alpha&=&\big(0,\ldots,0,u_1(x^{1}),\ldots,u_1(x^{a}),u_2(x^{1}),\ldots,u_2(x^{a}),u_3(x^{1}),\ldots,u_3(x^{a})\big).
\end{eqnarray*}
Then, the equations $\D\cdot w=0$, $w+\D \phi =u \mbox{ on $\Omega_h\setminus \partial\Omega_h$}$ with the $0$-boundary condition of $w$ and $\phi$  give a $4a$-system of linear equations, which is denoted by $Ay=\alpha$ with a $4a\times4a$-matrix $A$. Note that $A$ is independent of $u$. Our assertion holds, if $A$ is invertible. To prove invertibility of $A$, we show that $Ay=0$ if and only if $y=0$. There is at least one $y$ satisfying $Ay=0$. Then, we obtain  at least one pair $w,\phi$ satisfying (\ref{HHD})$|_{u=0}$. By Lemma \ref{vector-cal}, we see that $\sum_{x\in\Omega_h\setminus\partial\Omega_h} w(x)\cdot \D\phi(x)=0$. Hence,  (\ref{HHD})$|_{u=0}$ yields 
\begin{eqnarray*}
\sum_{x\in\Omega_h\setminus\partial\Omega_h} |w(x)|^2=0,\quad \sum_{x\in\Omega_h\setminus\partial\Omega_h} |\D\phi(x)|^2=0.     
\end{eqnarray*}
Therefore, $w=0$ on $\Omega_h$ and $\D\phi=0$ on  $\Omega_h\setminus\partial\Omega_h$. The latter equality implies $\phi=0$ on $\Omega_h$ due to the $0$-boundary condition.  Thus, $A$ is invertible. 

Suppose that there are two pairs $w,\phi$ and $\tilde{w},\tilde{\phi}$ which satisfy (\ref{HHD}).  Then, we see that $w-\tilde{w}$, $\phi-\tilde{\phi}$ yields the unique trivial solution of $Ay=0$. Therefore, we conclude that $w=\tilde{w}$ and $\phi=\tilde{\phi}$.   \end{proof}
\medskip\medskip

\noindent{\bf Definition.} {\it Define the discrete Helmholtz-Hodge decomposition operator  $P_h$ for each function $u:\Omega_h\to\R^3$ as 
$$P_hu:=w\quad(\mbox{$w$ is the one mentioned in Theorem \ref{Projection}}).$$}
\begin{Thm}\label{22estimate}
We have the following estimates in regards to the decomposition $u=P_h u+\D \phi$: 
\begin{eqnarray*}
&&\sum_{x\in \Omega_h}|P_h u(x)|^2\le \sum_{x\in\Omega_h\setminus \partial\Omega_h}|u(x)|^2,\quad \sum_{x\in \Omega_h\setminus \partial\Omega_h}|\D\phi(x)|^2\le \sum_{x\in\Omega_h\setminus \partial\Omega_h}|u(x)|^2,\\
&&\sum_{x\in \Omega_h}|\phi(x)|^2\le A\sum_{x\in\Omega_h\setminus \partial\Omega_h}|\D\phi(x)|^2\le A\sum_{x\in\Omega_h\setminus \partial\Omega_h}|u(x)|^2,
\end{eqnarray*}
where $A>0$ is a constant depending only on $\Omega$. Furthermore, if $u|_{\partial\Omega_h}=0$, we have 
\begin{eqnarray}\label{242424242}
\sum_{x\in \Omega_h\setminus \partial\Omega_h}|u(x)-P_h u(x)|^2\le A\sum_{x\in\Omega_h\setminus \partial\Omega_h}|\D\cdot u(x)|^2. 
\end{eqnarray}
\end{Thm}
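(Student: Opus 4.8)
\noindent The plan is to read off the first three estimates from the orthogonality intrinsic to the discrete Helmholtz--Hodge decomposition, combined with the discrete Poincar\'e-type inequality established in the Appendix, and then to deduce \eqref{242424242} by one further summation by parts that uses the hypothesis $u|_{\partial\Omega_h}=0$ in an essential way.

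First I would establish the orthogonality. With $w:=P_hu$, Theorem \ref{Projection} gives $u=w+\D\phi$ and $\D\cdot w=0$ on $\Omega_h\setminus\partial\Omega_h$, together with $w=\phi=0$ on $\partial\Omega_h$; Lemma \ref{vector-cal} applied to $w$ and $\phi$ then yields $\sum_{x\in\Omega_h\setminus\partial\Omega_h}w(x)\cdot\D\phi(x)=0$. Squaring the identity $u=w+\D\phi$ and summing over $\Omega_h\setminus\partial\Omega_h$ gives $\sum_{x\in\Omega_h\setminus\partial\Omega_h}|u(x)|^2=\sum_{x\in\Omega_h\setminus\partial\Omega_h}|w(x)|^2+\sum_{x\in\Omega_h\setminus\partial\Omega_h}|\D\phi(x)|^2$, and since $w=P_hu$ vanishes on $\partial\Omega_h$ the first sum on the right equals $\sum_{x\in\Omega_h}|P_hu(x)|^2$. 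The first two estimates follow at once. For the third, the second inequality is exactly the bound on $\D\phi$ just obtained, while the first is the discrete Poincar\'e-type inequality of the Appendix, namely $\sum_{x\in\Omega_h}|\phi(x)|^2\le A\sum_{x\in\Omega_h\setminus\partial\Omega_h}|\D\phi(x)|^2$ for $\phi$ vanishing on $\partial\Omega_h$ (and extended by $0$), with $A$ depending only on $\Omega$; from here on $A$ denotes a generic constant of this kind.

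To get \eqref{242424242} I would assume $u|_{\partial\Omega_h}=0$ and note that then $u-P_hu=\D\phi$ on $\Omega_h\setminus\partial\Omega_h$, while the vector field $u-w$ vanishes on $\partial\Omega_h$ (both $u$ and $w$ do). Hence $\sum_{x\in\Omega_h\setminus\partial\Omega_h}|\D\phi(x)|^2=\sum_{x\in\Omega_h\setminus\partial\Omega_h}(u(x)-w(x))\cdot\D\phi(x)$, and Lemma \ref{vector-cal} applied to $u-w$ and $\phi$ converts the right-hand side into $-\sum_{x\in\Omega_h\setminus\partial\Omega_h}(\D\cdot(u-w)(x))\phi(x)$, which equals $-\sum_{x\in\Omega_h\setminus\partial\Omega_h}(\D\cdot u(x))\phi(x)$ by linearity of $\D\cdot$ and $\D\cdot w=0$. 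Cauchy--Schwarz gives the bound $\big(\sum_{x\in\Omega_h\setminus\partial\Omega_h}|\D\cdot u(x)|^2\big)^{1/2}\big(\sum_{x\in\Omega_h}|\phi(x)|^2\big)^{1/2}$, and the third estimate turns the second factor into $A^{1/2}\big(\sum_{x\in\Omega_h\setminus\partial\Omega_h}|\D\phi(x)|^2\big)^{1/2}$; dividing by this last quantity (the case where it vanishes being trivial) and squaring gives $\sum_{x\in\Omega_h\setminus\partial\Omega_h}|\D\phi(x)|^2\le A\sum_{x\in\Omega_h\setminus\partial\Omega_h}|\D\cdot u(x)|^2$, which is \eqref{242424242}.

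The computations themselves are routine summations by parts together with a Cauchy--Schwarz/absorption step; the only external ingredient is the discrete Poincar\'e-type inequality, whose proof is deferred to the Appendix. The point needing the most care — and the one place the hypothesis $u|_{\partial\Omega_h}=0$ is used — is verifying that $u-w$ vanishes on $\partial\Omega_h$, so that Lemma \ref{vector-cal} may legitimately be applied to it; one must also keep track of the fact that $\D\cdot w=0$ holds precisely on $\Omega_h\setminus\partial\Omega_h$, which is exactly what is needed since every sum in the argument ranges over $\Omega_h\setminus\partial\Omega_h$.
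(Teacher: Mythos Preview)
Your proposal is correct and follows essentially the same approach as the paper: orthogonality from Lemma \ref{vector-cal} gives the Pythagorean identity for the first two estimates, the discrete Poincar\'e inequality from the Appendix gives the third, and for \eqref{242424242} the paper also computes $\sum|\D\phi|^2=\sum(u-P_hu)\cdot\D\phi=-\sum(\D\cdot u)\phi$ via Lemma \ref{vector-cal} and then applies Cauchy--Schwarz together with Poincar\'e, exactly as you do. Your extra care about where $u|_{\partial\Omega_h}=0$ is needed and where $\D\cdot w=0$ holds is accurate and matches what the paper uses implicitly.
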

\begin{proof} 
By Lemma \ref{vector-cal}, we have 
\begin{eqnarray*}
\sum_{x\in\Omega_h\setminus\partial\Omega_h}|u(x)|^2&=&\sum_{x\in\Omega_h\setminus\partial\Omega_h}(P_hu(x)+\D\phi(x))\cdot (P_hu(x)+\D\phi(x))\\
&=&\sum_{x\in \Omega_h\setminus\partial\Omega_h}|P_hu(x)|^2+\sum_{x\in \Omega_h\setminus\partial\Omega_h}|\D\phi(x)|^2,
\end{eqnarray*}
which yields the first and second inequalities. The third one follows from the discrete Poincar\'e type inequality proved in Appendix 2. We prove the last one.  By Lemma \ref{vector-cal} and  the discrete Poincar\'e type inequality, we have 
\begin{eqnarray*}
&&\sum_{x\in \Omega_h\setminus \partial\Omega_h}|u(x)-P_h u(x)|^2
=\sum_{x\in \Omega_h\setminus \partial\Omega_h}|\D\phi(x)|^2\\
&&\qquad =\Big(  \sum_{x\in \Omega_h\setminus \partial\Omega_h}|u(x)-P_h u(x)|^2 \Big)^\2\Big(  \sum_{x\in \Omega_h\setminus \partial\Omega_h}|\D\phi(x)|^2 \Big)^\2 \\
&&\qquad=\sum_{x\in \Omega_h\setminus \partial\Omega_h} (u(x)-P_h u(x))\cdot \D \phi(x)\\
&&\qquad =-\sum_{x\in \Omega_h\setminus \partial\Omega_h} (\D\cdot u(x))\phi(x)\le \Big(\sum_{x\in \Omega_h\setminus \partial\Omega_h} |\D\cdot u(x)|^2\Big)^\2 \Big(\sum_{x\in \Omega_h\setminus \partial\Omega_h}|\phi(x)|^2\Big)^\2\\
&&\qquad\le \Big(\sum_{x\in \Omega_h\setminus \partial\Omega_h} |\D\cdot u(x)|^2\Big)^\2  \Big(A\sum_{x\in \Omega_h\setminus \partial\Omega_h}|\D\phi(x)|^2\Big)^\2.
\end{eqnarray*}
\end{proof}
\setcounter{section}{2}
\setcounter{equation}{0}
\section{Discrete problem}

We discretize (\ref{NS}) with the discretization parameter $\tau>0$ for time and $h>0$ for space on $\Omega_h$ introduced in Section 2. Let $T_\tau\in\N$ be such that $T\in[\tau T_\tau,\tau T_\tau+\tau)$. Let  $v^0=(v^0_1,v^0_2,v^0_3)\in L^2_{\sigma}(\Omega)$ and let  $f=(f_1,f_2,f_3)\in L^2_{loc}([0,\infty); L^2(\Omega)^3)$.     
Define $\tilde{u}^0=(\tilde{u}_1^0,\tilde{u}_2^0,\tilde{u}_3^0):\Omega_h\to\R^3$ and $f^{n+1}=(f^{n+1}_1,f^{n+1}_2,f^{n+1}_3):\Omega_h\to\R^3$, $n=0,1,\cdots,T_\tau-1$ as  
\begin{eqnarray*}
\tilde{u}^0_i(x)&:=&\left\{
\begin{array}{lll}
&\dis h^{-3}\int_{{C_h(x)}}v^0_i(y)dy,\quad x\in\Omega_h\setminus\partial\Omega_h,
\medskip\\
&0\mbox{\quad\quad\quad\,\,\,\quad\,\,\, otherwise},
\end{array}
\right. \\\\
f^{n+1}_i(x) &:=&\tau^{-1}h^{-3}\int_{\tau n}^{\tau (n+1)}\int_{{C_h(x)}}f_i(t,y)dydt, \quad   x\in\Omega_h,
\end{eqnarray*}
 For each $n=0,1,\ldots,T_\tau$,  define functions $u^n=(u^n_1,u^n_2,u_3^n):\Omega_h\to\R^3$ and    $u^{n+\2}=(u^{n+\2}_1,u^{n+\2}_2,u_3^{n+\2}):\Omega_h\to\R^3$ in the following manner:    
\begin{eqnarray}\label{initial-1}
u^0&=&P_h \tilde{u}^0,\\\label{fractional-1}
\frac{u^{n+\2}_i(x)-u^n_i(x)}{\tau}&=&-\sum_{j=1}^3 \frac{u^n_j(x-he^j)D_j^+u_i^{n+\2}(x-he^j)+u^n_j(x)D_j^+u_i^{n+\2}(x)}{2}\\\nonumber
&&\!\!\!\!\!\!\!\!\!\!\!\!\!\!\!\!\!\!\!\!\!\!\!\!\!\!+\sum_{j=1}^3D_j^2u_i^{n+\2} (x)+f^{n+1}_i(x),\quad x\in\Omega_h\setminus\partial\Omega_h,\quad i=1,2,3,\\\label{fractional-2}
u^{n+\2}_i(x)&=&0, \quad x\in\partial\Omega_h,\\\label{fractional-3}
u^{n+1}&=&P_hu^{n+\2},
\end{eqnarray}
where \eqref{initial-1}-\eqref{fractional-3} are recurrence equations in an implicit form.  As we will see below, our form of the nonlinear terms means a lot in a priori $L^2$-estimates of the nonlinear term. The presence of  $\pm he^j$ in the nonlinear terms is originally seen in the pioneering works by Ladyzhenskaya (see \cite{Ladyzhenskaya}). 

For functions $u,w:\Omega_h\to\R^3$ or $\R$, we define the discrete $L^2$-inner product and norm as 
$$(u,w)_{\Omega_h}:=\sum_{x\in\Omega_h}u(x)\cdot w(x) h^3,\quad \norm u \norm_{\Omega_h}:=\sqrt{(u,u)_{\Omega_h}}.$$
We prove unconditional solvability of the equations \eqref{fractional-1}-\eqref{fractional-2} with respect to $u^{n+\2}$. Chorin  \cite{Chorin} proved conditional solvability of his original scheme with a scale depending on the maximum norm of $u^n$. 
\begin{Thm}\label{implicit}
Suppose that $u^n:\Omega_h\to\R^3$ satisfies $\D\cdot u^n=0$ on $\Omega_h\setminus\partial\Omega_h$ and $u^n=0$ on $\partial\Omega_h$. Then,  the equation \eqref{fractional-1}-\eqref{fractional-2} is uniquely solvable with respect to $u^{n+\2}$ for any mesh size $h,\tau$. 
\end{Thm}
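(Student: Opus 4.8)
The plan is to treat \eqref{fractional-1}--\eqref{fractional-2} as a linear system for the unknown vector $U:=u^{n+\2}\in\R^{3a}$ (where $a=\#(\Omega_h\setminus\partial\Omega_h)$ and the boundary values are fixed to $0$), and to show the associated matrix is invertible by proving that the only solution of the homogeneous system (with zero right-hand side, i.e. $u^n$ and $f^{n+1}$ replaced by $0$ in the affine part, equivalently testing the equation) is $U=0$. More precisely, rewrite \eqref{fractional-1} as
\begin{eqnarray*}
\frac{1}{\tau}u^{n+\2}_i(x)+\sum_{j=1}^3\frac{u^n_j(x-he^j)D_j^+u_i^{n+\2}(x-he^j)+u^n_j(x)D_j^+u_i^{n+\2}(x)}{2}-\sum_{j=1}^3 D_j^2 u_i^{n+\2}(x)=g_i(x),
\end{eqnarray*}
with $g_i(x):=\tau^{-1}u^n_i(x)+f^{n+1}_i(x)$ a known function; this is $LU=G$ for a linear operator $L$ on the finite-dimensional space of functions $\Omega_h\to\R^3$ vanishing on $\partial\Omega_h$. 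It suffices to show $L$ is injective, i.e. $LU=0\Rightarrow U=0$.

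The key step is an energy estimate: take the discrete $L^2$-inner product of $LU=0$ with $U$ itself. The diffusion term $-\sum_j D_j^2 u_i^{n+\2}$ contributes, after discrete summation by parts (a routine shift of indices using $U|_{\partial\Omega_h}=0$), a nonnegative quantity $\sum_{i,j}\norm D_j^+ u_i^{n+\2}\norm^2_{\cdots}$; the zeroth-order term contributes $\tau^{-1}\norm U\norm_{\Omega_h}^2$. The crucial point — and this is exactly why the authors chose the particular symmetrized form with the $\pm he^j$ shifts in the nonlinear term — is that the nonlinear (convection) term contributes \emph{zero} when paired with $U$, because $\D\cdot u^n=0$ on $\Omega_h\setminus\partial\Omega_h$ and $u^n=0$ on $\partial\Omega_h$. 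I would verify this by writing out
\begin{eqnarray*}
\sum_{x}\sum_{i,j}\frac{u^n_j(x-he^j)D_j^+u_i^{n+\2}(x-he^j)+u^n_j(x)D_j^+u_i^{n+\2}(x)}{2}\,u_i^{n+\2}(x)\,h^3
\end{eqnarray*}
and performing a discrete summation by parts / index shift in the first summand to convert it into an expression that combines with the second summand; the telescoping leaves only terms proportional to $D_j^-u^n_j(x)=(\D\cdot u^n)_j$-type factors times $|u^{n+\2}_i(x)|^2$, which vanish by the divergence-free hypothesis, plus boundary terms which vanish by $u^n|_{\partial\Omega_h}=0$ and $u^{n+\2}|_{\partial\Omega_h}=0$. (This is the standard skew-symmetry of the well-chosen discrete convective operator; Lemma \ref{vector-cal} and its index-shift technique are the model.) Consequently $\tau^{-1}\norm U\norm_{\Omega_h}^2+\sum_{i,j}\norm D_j^+u_i^{n+\2}\norm_{\cdots}^2=0$, forcing $U\equiv 0$.

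From injectivity of $L$ on a finite-dimensional space we get bijectivity, hence \eqref{fractional-1}--\eqref{fractional-2} has a unique solution $u^{n+\2}$ for the given data, for \emph{any} $h,\tau>0$ with no scaling restriction — note the argument never uses smallness of $\tau$ or of $\|u^n\|_\infty$, in contrast to Chorin's conditional result. The main obstacle in writing this cleanly is the bookkeeping in the discrete summation by parts for the nonlinear term: one must carefully track which terms survive and confirm that the surviving ones are precisely multiples of $\D\cdot u^n$ and of boundary values, using the standing convention that functions are extended by $0$ outside $\Omega_h$. Everything else (linearity, the diffusion summation by parts, the conclusion $U=0$) is routine. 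I would present the nonlinear-term cancellation as a short separate computation, possibly isolating it as the identity
\begin{eqnarray*}
\sum_{x\in\Omega_h\setminus\partial\Omega_h}\sum_{i,j}\frac{u^n_j(x-he^j)D_j^+w_i(x-he^j)+u^n_j(x)D_j^+w_i(x)}{2}\,w_i(x)=0
\end{eqnarray*}
valid for every $w:\Omega_h\to\R^3$ vanishing on $\partial\Omega_h$, whenever $\D\cdot u^n=0$ on $\Omega_h\setminus\partial\Omega_h$ and $u^n=0$ on $\partial\Omega_h$, and then the theorem follows immediately.
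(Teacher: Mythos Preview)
Your proposal is correct and follows essentially the same approach as the paper: rewrite \eqref{fractional-1}--\eqref{fractional-2} as a square linear system in $u^{n+\2}$, prove injectivity by testing the homogeneous equation against $u^{n+\2}$ itself, and use exactly the index-shift computation you describe to show the convection term reduces to $-\sum_x(\D\cdot u^n)|u^{n+\2}|^2h^3=0$ while the diffusion term yields $\sum_j\norm D_j^+u^{n+\2}\norm_{\Omega_h}^2$. The paper carries out these two summation-by-parts calculations explicitly (labelled (i) and (ii)), arriving at the identical identity $\norm u^{n+\2}\norm_{\Omega_h}^2+\tau\sum_j\norm D_j^+u^{n+\2}\norm_{\Omega_h}^2=0$.
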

\begin{proof}
We label the elements of $\Omega_h\setminus\partial\Omega_h$ as $x^{1},x^{2},\ldots,x^{a}$. Introduce $y,\alpha\in\R^{3a}$ as  
\begin{eqnarray*}
y&:=&\big(u_1^{n+\2}(x^{1}),\ldots,u_1^{n+\2}(x^{a}),u_2^{n+\2}(x^{1}),\ldots,u_2^{n+\2}(x^{a}),u_3^{n+\2}(x^{1}),\ldots,u_3^{n+\2}(x^{a})   \big),\\
\alpha&:=&\big(u_1^{n}(x^{1})+\tau f^{n+1}_1(x^1),\ldots,u_1^{n}(x^{a})+\tau f^{n+1}_1(x^a),u_2^{n}(x^{1})+\tau f^{n+1}_2(x^1),\\
&&\ldots,u_2^{n}(x^{a})+\tau f^{n+1}_2(x^a),u_3^{n}(x^{1})+\tau f^{n+1}_3(x^1),\ldots,u_3^{n}(x^{a}) +\tau f^{n+1}_3(x^a)  \big).
\end{eqnarray*}
Then,  \eqref{fractional-1}-\eqref{fractional-2} are equivalent to the linear equations 
$$A(u^n;h,\tau)y=\alpha,$$
where $A(u^n;h,\tau)$  is a $3a\times3a$-matrix depending on $u^n,h,\tau$. 

We prove that the matrix $A(u^n;h,\tau)$ is invertible if $u^n$ satisfies $\D\cdot u_n=0$ in $\Omega_h\setminus\partial\Omega_h$. For this purpose, we check that  $A(u^n;h,\tau)y=0$ has the unique solution $y=0$.  Let $y=y_0$ be a solution of $A(u^n;h,\tau)y=0$. Then, we have $u^{n+\2}:\Omega_h\to\R^3$ with $u^{n+\2}|_{\partial\Omega_h}=0$ such that    
\begin{eqnarray*}
u^{n+\2}_i(x)&=&-\tau\sum_{j=1}^3 \frac{u^n_j(x-he^j)D_j^+u_i^{n+\2}(x-he^j)+u^n_j(x)D_j^+u_i^{n+\2}(x)}{2}\\\nonumber
&&+\tau\sum_{j=1}^3D_j^2u_i^{n+\2} (x),\quad x\in\Omega_h\setminus\partial\Omega_h,\quad i=1,2,3.
\end{eqnarray*}
Then, we have
\begin{eqnarray*}
(u^{n+\2},u^{n+\2})_{\Omega_h}&=&\norm u^{n+\2}\norm_{\Omega_h}^2\\
&=&-\frac{\tau}{2}\sum_{i,j=1}^3 \sum_{x\in\Omega_h\setminus\partial\Omega_h}\Big(u^n_j(x-he^j)D_j^+u^{n+\2}_i(x-he^j)\\
&& + u^n_j(x)D_j^+u^{n+\2}_i(x)\Big)u^{n+\2}_i(x)h^3\\
 &&+\tau\sum_{i,j=1}^3 \sum_{x\in\Omega_h\setminus\partial\Omega_h} D_j^2u_i^{n+\2}(x)u_i^{n+\2}(x) h^3.
\end{eqnarray*}
Here, the  above  two  summations are denoted by (i), (ii), respectively. Noting the $0$-boundary condition of $ u^{n+\2}$, we have 
 \begin{eqnarray*}
{\rm(i)}& =&\sum_{i,j=1}^3 \sum_{x\in\Omega_h\setminus\partial\Omega_h}\Big(u^n_j(x-he^j)\frac{u^{n+\2}_i(x)-u^{n+\2}_i(x-he^j)}{h}\\
&&+u^n_j(x)\frac{u^{n+\2}_i(x+he^j)-u^{n+\2}_i(x)}{h} \Big)u^{n+\2}_i(x)h^3\\
&=&\sum_{i,j=1}^3 \sum_{x\in\Omega_h\setminus\partial\Omega_h}
-\frac{u^n_j(x)-u^n_j(x-he^j)}{h}u^{n+\2}_i(x)^2h^3\\
&&+\sum_{i,j=1}^3 \sum_{x\in\Omega_h}
\frac{1}{h}u^n_j(x)u^{n+\2}_i(x+he^j)u^{n+\2}_i(x)h^3\\
&&-\sum_{i,j=1}^3 \sum_{x\in\Omega_h} \frac{1}{h}u^n_j(x-he^j)u^{n+\2}_i(x-he^j)u^{n+\2}_i(x)h^3.
\end{eqnarray*} 
Shifting $x$ to $x+he^j$ in the last summation, we  obtain 
 \begin{eqnarray*}
{\rm(i)}=-\sum_{x\in\Omega_h\setminus\partial\Omega_h}\big(\D\cdot u^n(x)\big)|u^{n+\2}(x)|^2h^3.
\end{eqnarray*} 
Similarly, we obtain 
\begin{eqnarray*}
\rm{(ii)}&=&  \sum_{i,j=1}^3 \sum_{x\in\Omega_h\setminus\partial\Omega_h}
\frac{u_i^{n+\2} (x+he^j)-2u_i^{n+\2} (x)+u_i^{n+\2} (x-he^j)}{h^2}u^{n+\2}_i(x)h^3\\
&=& \sum_{i,j=1}^3 \sum_{x\in\Omega_h\setminus\partial\Omega_h}
\frac{u_i^{n+\2} (x+he^j)-u_i^{n+\2} (x)}{h^2}u^{n+\2}_i(x)h^3\\
&&- \sum_{i,j=1}^3 \sum_{x\in\Omega_h\setminus\partial\Omega_h}\frac{u_i^{n+\2} (x)-u_i^{n+\2} (x-he^j)}{h^2}u^{n+\2}_i(x)h^3\\
&=& \sum_{i,j=1}^3 \sum_{x\in\Omega_h}
\frac{u_i^{n+\2} (x+he^j)-u_i^{n+\2} (x)}{h^2}u^{n+\2}_i(x)h^3\\
&&- \sum_{i,j=1}^3 \sum_{x\in\Omega_h}\frac{u_i^{n+\2} (x+he^j)-u_i^{n+\2} (x)}{h^2}u^{n+\2}_i(x+he^j)h^3\\
&=&- \sum_{j=1}^3\norm D^+_ju^{n+\2}\norm_{\Omega_h}^2\le0. 
\end{eqnarray*} 
Hence, the discrete divergence free constraint of $u^{n}$ implies 
$$\norm u^{n+\2}\norm_{\Omega_h}^2+\tau\sum_{j=1}^3\norm D^+_ju^{n+\2}\norm_{\Omega_h}^2=0.$$
Thus, we conclude that $u^{n+\2}=0$ and $y_0=0$. \end{proof}
\setcounter{section}{3}
\setcounter{equation}{0}
\section{$L^2$-estimate}

In this section, we obtain several $L^2$-estimates. 
Recall that $v^0$ is taken from $L^2_{\sigma}(\Omega)$ and $f$ from $L^2_{loc}([0,\infty);L^2(\Omega)^3)$. We observe that 
\begin{eqnarray*}
&&\tilde{u}^0_i(x)^2h^3=\Big(h^{-3}\int_{{C_h(x)}}v^0_i(y)dy\Big)^2h^3
\le h^{-3}\Big\{ \Big(\int_{{C_h(x)}}1dy \Big)^\2   \Big(\int_{{C_h(x)}}v^0_i(y)^2dy\Big)^\frac{1}{2}   \Big\}^2\\
&&\qquad\qquad=\int_{{C_h(x)}}v^0_i(y)^2dy.
\end{eqnarray*}
Hence, with Theorem \ref{22estimate}, we see that   $u^0=P_h\tilde{u}^0$ satisfies 
\begin{eqnarray*}\label{initial-L2}
\norm u^0\norm_{\Omega_h}\le\norm \tilde{u}^0\norm_{\Omega_h}\le \norm v^0\norm_{L^2(\Omega)^3}.
\end{eqnarray*}
Similar calculation yields 
\begin{eqnarray*}\label{L2L2}
&&\sum_{m=0}^n\norm f^{m+1}\norm_{\Omega_h}^2\tau \le \norm f\norm_{L^2([0,\tau (n+1)];L^2(\Omega)^3)}^2\le \norm f\norm_{L^2([0,T];L^2(\Omega)^3)}^2,\quad 0\le n<T_\tau
\end{eqnarray*}
\begin{Thm}\label{solvability}
The discrete problem \eqref{initial-1}-\eqref{fractional-3} is uniquely solvable with the  following estimates for $n=0,1,\ldots,T_\tau-1$: 
\begin{eqnarray} \label{411}
\quad \norm u^{n+\2}\norm_{\Omega_h}&\le& \norm u^n\norm_{\Omega_h}+\norm f^{n+1}\norm_{\Omega_h}\tau,\\\label{412}
\norm u^{n+1}\norm_{\Omega_h}&\le& \norm u^0\norm_{\Omega_h}+\sum_{m=0}^{T_\tau}\norm f^{m+1}\norm_{\Omega_h}\tau\\\nonumber 
&\le&  \norm v^0\norm_{L^2(\Omega)^3}+ \sqrt{T}\norm f\norm_{L^2([0,T];L^2(\Omega)^3)}   ,\\\label{413}
\norm u^{n+1}\norm_{\Omega_h}^2&\le&\norm u^0\norm_{\Omega_h}^2-\sum_{m=0}^n  \Big(  \sum_{j=1}^3\norm D^+_ju^{m+\2}\norm_{\Omega_h}^2 \Big)\tau \\
\nonumber 
&& +2\sum_{m=0}^n\norm u^m\norm_{\Omega_h} 
 \norm f^{m+1} \norm_{\Omega_h}\tau +\sum_{m=0}^{n}\norm f^{m+1}\norm_{\Omega_h}^2\tau^2.
\end{eqnarray}
\end{Thm}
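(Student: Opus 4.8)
The plan is to derive all three estimates from one energy identity, after first disposing of unique solvability by an induction that keeps the discrete divergence-free constraint alive.

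\emph{Solvability.} I would induct on $n$ with the invariant that $u^n$ satisfies $\D\cdot u^n=0$ on $\Omega_h\setminus\partial\Omega_h$ and $u^n=0$ on $\partial\Omega_h$. This holds for $n=0$ since $u^0=P_h\tilde u^0$ and the divergence-free part produced by $P_h$ always satisfies \eqref{HHD} by Theorem \ref{Projection}. If it holds for $n$, Theorem \ref{implicit} yields a unique $u^{n+\2}$ solving \eqref{fractional-1}--\eqref{fractional-2} with $u^{n+\2}=0$ on $\partial\Omega_h$, and then $u^{n+1}:=P_hu^{n+\2}$ is well defined and again satisfies the invariant by Theorem \ref{Projection}; this closes the induction and gives unique solvability of \eqref{initial-1}--\eqref{fractional-3} for $n=0,\dots,T_\tau-1$.

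\emph{Energy identity.} Next I would test \eqref{fractional-1} against $u^{n+\2}$: multiply by $u^{n+\2}_i(x)h^3$ and sum over $i=1,2,3$ and $x\in\Omega_h\setminus\partial\Omega_h$, which (as $u^{n+\2}$ vanishes on $\partial\Omega_h$) equals the sum over all of $\Omega_h$, so the left side is $\tau^{-1}(u^{n+\2}-u^n,u^{n+\2})_{\Omega_h}$. The convective sum is $-\2$ times the quantity (i) in the proof of Theorem \ref{implicit}, so by the summation by parts performed there together with the invariant $\D\cdot u^n=0$ it equals $\2\sum_{x\in\Omega_h\setminus\partial\Omega_h}(\D\cdot u^n(x))|u^{n+\2}(x)|^2h^3=0$; the diffusion sum is the quantity (ii) there, equal to $-\sum_{j=1}^3\norm D^+_ju^{n+\2}\norm_{\Omega_h}^2$; and the forcing sum is $(f^{n+1},u^{n+\2})_{\Omega_h}$. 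Using the discrete Pythagorean identity $2(a-b,a)_{\Omega_h}=\norm a\norm_{\Omega_h}^2-\norm b\norm_{\Omega_h}^2+\norm a-b\norm_{\Omega_h}^2$ this becomes
\begin{eqnarray*}
&&\norm u^{n+\2}\norm_{\Omega_h}^2-\norm u^n\norm_{\Omega_h}^2+\norm u^{n+\2}-u^n\norm_{\Omega_h}^2+2\tau\sum_{j=1}^3\norm D^+_ju^{n+\2}\norm_{\Omega_h}^2\\
&&\qquad =2\tau(f^{n+1},u^{n+\2})_{\Omega_h}.
\end{eqnarray*}
For \eqref{411} I discard the nonnegative terms $\norm u^{n+\2}-u^n\norm_{\Omega_h}^2$ and $2\tau\sum_j\norm D^+_ju^{n+\2}\norm_{\Omega_h}^2$, obtaining $\norm u^{n+\2}\norm_{\Omega_h}^2\le(u^n,u^{n+\2})_{\Omega_h}+\tau(f^{n+1},u^{n+\2})_{\Omega_h}\le(\norm u^n\norm_{\Omega_h}+\tau\norm f^{n+1}\norm_{\Omega_h})\norm u^{n+\2}\norm_{\Omega_h}$ by Cauchy--Schwarz, and divide (the case $u^{n+\2}=0$ being trivial). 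For \eqref{412}, Theorem \ref{22estimate} gives $\norm u^{n+1}\norm_{\Omega_h}=\norm P_hu^{n+\2}\norm_{\Omega_h}\le\norm u^{n+\2}\norm_{\Omega_h}$ (using $u^{n+\2}|_{\partial\Omega_h}=0$), so \eqref{411} iterates to $\norm u^{n+1}\norm_{\Omega_h}\le\norm u^0\norm_{\Omega_h}+\sum_{m=0}^{n}\norm f^{m+1}\norm_{\Omega_h}\tau$; the second line then follows by inserting $\norm u^0\norm_{\Omega_h}\le\norm v^0\norm_{L^2(\Omega)^3}$ and applying Cauchy--Schwarz in $m$ together with the discrete $L^2L^2$-bound on $f$ recorded before the theorem. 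For \eqref{413} I keep the diffusion term: from the identity, using $\norm u^{n+1}\norm_{\Omega_h}\le\norm u^{n+\2}\norm_{\Omega_h}$ and splitting the forcing term as $2\tau(f^{n+1},u^n)_{\Omega_h}+2\tau(f^{n+1},u^{n+\2}-u^n)_{\Omega_h}\le 2\tau\norm f^{n+1}\norm_{\Omega_h}\norm u^n\norm_{\Omega_h}+\tau^2\norm f^{n+1}\norm_{\Omega_h}^2+\norm u^{n+\2}-u^n\norm_{\Omega_h}^2$ (Young), the $\norm u^{n+\2}-u^n\norm_{\Omega_h}^2$ cancels, leaving a one-step inequality; summing over $m=0,\dots,n$ telescopes in $\norm u^m\norm_{\Omega_h}^2$, and replacing $2\tau\sum_j\norm D^+_ju^{m+\2}\norm_{\Omega_h}^2$ by the smaller $\tau\sum_j\norm D^+_ju^{m+\2}\norm_{\Omega_h}^2$ yields \eqref{413}.

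\emph{Main obstacle.} The only genuinely nontrivial point is the vanishing of the convective contribution, and this is already supplied by the proof of Theorem \ref{implicit}: the asymmetric $\pm he^j$ shift in \eqref{fractional-1} makes the discrete convective operator skew against $u^{n+\2}$ up to a multiple of $\D\cdot u^n$, which is zero by the invariant. The remaining care needed is organizational: \eqref{411} must be proved before \eqref{412}, and for \eqref{413} the forcing term has to be treated by Young's inequality after the split $u^{n+\2}=u^n+(u^{n+\2}-u^n)$ rather than by bare Cauchy--Schwarz, so that only $\norm u^m\norm_{\Omega_h}$ (and not $\norm u^{n+\2}\norm_{\Omega_h}$) survives on the right-hand side.
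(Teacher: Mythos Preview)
Your proof is correct and follows essentially the same approach as the paper: both test \eqref{fractional-1} against $u^{n+\2}$, invoke the calculations (i)--(ii) from the proof of Theorem \ref{implicit} to kill the convective term and extract the diffusion, and then use $\norm P_h u^{n+\2}\norm_{\Omega_h}\le\norm u^{n+\2}\norm_{\Omega_h}$ to iterate. The only minor variation is in \eqref{413}: the paper bounds $\norm u^n\norm_{\Omega_h}\norm u^{n+\2}\norm_{\Omega_h}+\tau\norm f^{n+1}\norm_{\Omega_h}\norm u^{n+\2}\norm_{\Omega_h}$ directly by substituting \eqref{411} for $\norm u^{n+\2}\norm_{\Omega_h}$, whereas you first polarize to produce $\norm u^{n+\2}-u^n\norm_{\Omega_h}^2$ and then absorb it via Young's inequality after splitting $u^{n+\2}=u^n+(u^{n+\2}-u^n)$---both routes land on the same one-step inequality (yours even gives a spare factor $2$ on the diffusion term, which you then discard).
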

\begin{proof}
By Theorem \ref{implicit} and (i)-(ii) in its proof,  \eqref{initial-1}-\eqref{fractional-3} is uniquely solvable with  
\begin{eqnarray*}
\norm u^{n+\2}\norm_{\Omega_h}^2 &\le& ( u^{n}, u^{n+\2})_{\Omega_h}-\sum_{j=1}^3\norm D^+_ju^{n+\2}\norm_{\Omega_h}^2\tau +(f^{n+1},u^{n+\2})_{\Omega_h}\tau \\
&&\le \norm u^{n+1}\norm_{\Omega_h}\norm u^{n+\2}\norm_{\Omega_h}+\norm f^{n+1}\norm_{\Omega_h}\norm u^{n+\2}\norm_{\Omega_h}\tau.
\end{eqnarray*} 
Hence, we obtain 
\begin{eqnarray*}
\norm u^{n+\2}\norm_{\Omega_h}&\le&\norm u^{n}\norm_{\Omega_h}+\norm f^{n+1}\norm_{\Omega_h}\tau,\\
\norm u^{n+1}\norm_{\Omega_h}&=& \norm P_hu^{n+\2}\norm_{\Omega_h}\le  \norm u^{n+\2}\norm_{\Omega_h} 
\le\norm u^{n} \norm_{\Omega_h}+ \norm f^{n+1}\norm_{\Omega_h}\tau,\\
\norm u^{n+1}\norm_{\Omega_h}^2&\le& \norm u^{n+\2} \norm_{\Omega_h}^2\\
&\le&  \norm u^{n} \norm_{\Omega_h}\norm u^{n+\2} \norm_{\Omega_h} -\sum_{j=1}^3\norm D^+_ju^{n+\2}\norm_{\Omega_h}^2\tau+ \norm f^{n+1}\norm_{\Omega_h}\norm u^{n+\2}\norm_{\Omega_h}\tau   \\
&\le& \norm u^{n} \norm_{\Omega_h}^2 -\sum_{j=1}^3\norm D^+_ju^{n+\2}\norm_{\Omega_h}^2\tau+2\norm u^{n} \norm_{\Omega_h}\norm f^{n+1} \norm_{\Omega_h}\tau\\
&&+\norm f^{n+1}\norm_{\Omega_h}^2\tau^2. 
\end{eqnarray*}
These inequalities imply \eqref{411}-\eqref{413}.  
 \end{proof}
\setcounter{section}{4}
\setcounter{equation}{0}
\section{Weak convergence}

 Set $\delta=(h,\tau)$ and 
 $$C_h^+(x):=C_h(x+\frac{h}{2}e^1+\frac{h}{2}e^2+\frac{h}{2}e^3)=[x_1,x_1+h)\times[x_2,x_2+h)\times[x_3,x_3+h).$$
For the solution $u^n,u^{n+\2}$ of  \eqref{initial-1}-\eqref{fractional-3}, define the step functions  $u_\delta:[0,T]\times\Omega\to\R^3$, $v_\delta,w^i_\delta:[0,T]\times\Omega\to\R^3$, $i=1,2,3$ as 
\begin{eqnarray*}
u_\delta(t,x)&:=&\left\{
\begin{array}{lll}
&u^{n}(y)\mbox{\quad\quad\ for $t\in[n\tau,n\tau+\tau)$, $x\in {C_h^+(y)}$, $y\in \Omega_h$},
\medskip\\
&0\mbox{\quad\quad\quad\,\,\,\quad\,\,\, otherwise},
\end{array}
\right. \\
v_\delta(t,x)&:=&\left\{
\begin{array}{lll}
&u^{n+\2}(y)\mbox{\quad\quad\ for $t\in[n\tau,n\tau+\tau)$, $x\in {C_h^+(y)}$, $y\in \Omega_h$},
\medskip\\
&0\mbox{\quad\quad\quad\,\,\,\quad\,\,\, otherwise},
\end{array}
\right. \\
w^i_\delta(t,x)&:=&\left\{
\begin{array}{lll}
&D_i^+u^{n+\2}(y)\mbox{\quad for $t\in[n\tau,n\tau+\tau)$, $x\in {C_h^+(y)}$, $y\in \Omega_h$},
\medskip\\
&0\mbox{\quad\quad\quad\quad\quad\, otherwise}, 
\end{array}
\right.
\end{eqnarray*}
where $n=0,1,\ldots,T_\tau$.  In the rest of our argument, the statement ``there exists a sequence $\delta\to0$ ...'' means ``there exists a sequence $\delta_l=(h_l,\tau_l)$ with $h_l,\tau_l\searrow0$ as $l\to\infty$ ...''. The next theorem states weak convergence of the above step functions.   
\begin{Thm}\label{weak convergence}
There exists a sequence $\delta\to0$ and a function $v\in L^2([0,T];H^1_{0,\sigma}(\Omega))$ for which  the following weak convergence holds: 
\begin{eqnarray}\label{511}
&&u_\delta \wto v \mbox{ \quad in $L^2([0,T];L^2(\Omega)^3)$ as $\delta\to0$},\\\label{512}
&&v_\delta \wto v \mbox{ \quad in $L^2([0,T];L^2(\Omega)^3)$ as $\delta\to0$},\\\label{523}
&&w^i_\delta \wto \partial_{x_i} v \mbox{ \quad in $L^2([0,T];L^2(\Omega)^3)$ as $\delta\to0$ ($i=1,2,3$)}.
\end{eqnarray}  
\end{Thm}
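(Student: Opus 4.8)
The plan is to extract everything from the uniform a priori bounds of Theorem~\ref{solvability} by weak compactness, then to identify the three weak limits with a single function $v$ and to verify $v\in L^2([0,T];H^1_{0,\sigma}(\Omega))$. Estimate \eqref{412} gives $\sup_n\|u^n\|_{\Omega_h}\le C$ uniformly in $\delta$, and then \eqref{411} yields the same bound for $u^{n+\2}$; hence $\{u_\delta\}$ and $\{v_\delta\}$ are bounded in $L^\infty([0,T];L^2(\Omega)^3)\subset L^2([0,T];L^2(\Omega)^3)$, while \eqref{413} bounds $\sum_m\sum_j\|D^+_ju^{m+\2}\|_{\Omega_h}^2\tau$, so $\{w^i_\delta\}$ is bounded in $L^2([0,T];L^2(\Omega)^3)$. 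Passing to a single sequence $\delta\to0$ by the Banach--Alaoglu theorem, we obtain weak limits $u_\delta\wto v^{(0)}$, $v_\delta\wto v^{(1)}$, $w^i_\delta\wto w^{(i)}$ in $L^2([0,T];L^2(\Omega)^3)$ for $i=1,2,3$.

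Next I would show the two velocities share the same limit. Write $u^{n+\2}=u^{n+1}+\D\phi^{n+1}$ for the decomposition of Theorem~\ref{Projection}. The orthogonality established inside the proof of Theorem~\ref{22estimate} gives $\|\D\phi^{n+1}\|_{\Omega_h}^2=\|u^{n+\2}\|_{\Omega_h}^2-\|u^{n+1}\|_{\Omega_h}^2$; inserting \eqref{411} and telescoping in $n$ yields $\sum_n\|\D\phi^{n+1}\|_{\Omega_h}^2\le C$ --- crucially \emph{without} a factor $\tau$. Hence, letting $\tilde u_\delta$ be the step function built from $u^{n+1}$ on $[n\tau,(n+1)\tau)$, one has $\|v_\delta-\tilde u_\delta\|_{L^2([0,T];L^2(\Omega)^3)}^2=\sum_n\|\D\phi^{n+1}\|_{\Omega_h}^2\,\tau\le C\tau\to0$, i.e. $v_\delta-\tilde u_\delta\to0$ strongly. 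Since $\tilde u_\delta(t)=u_\delta(t+\tau)$ up to one time cell near each endpoint, testing against $\psi\in C^\infty_0((0,T)\times\Omega)$ and moving the shift onto $\psi$ (using $\psi(\cdot-\tau)\to\psi$ strongly in $L^2$ against the weakly convergent $u_\delta$) shows $\tilde u_\delta\wto v^{(0)}$; therefore $v^{(1)}=v^{(0)}=:v$, which establishes \eqref{511}--\eqref{512}.

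It then remains to identify $w^{(i)}=\partial_{x_i}v$ and show $v\in L^2([0,T];H^1_{0,\sigma}(\Omega))$. For $\psi\in C^\infty_0((0,T)\times\Omega)$, discrete summation by parts (the one-dimensional analogue of Lemma~\ref{vector-cal}) rewrites $\iint w^i_\delta\cdot\psi$ as $-\iint v_\delta\cdot(D_i^-\psi)_\delta$ plus errors that vanish by consistency of the difference quotients; letting $\delta\to0$ gives $w^{(i)}=\partial_{x_i}v$, so $v\in L^2([0,T];H^1(\Omega)^3)$ and \eqref{523} holds. Likewise, because $\D\cdot u^n=0$ on $\Omega_h\setminus\partial\Omega_h$, Lemma~\ref{vector-cal} applied to samples of a scalar $\psi$ gives $\iint u_\delta\cdot(\D\psi)_\delta=0$, whence $\nabla\cdot v=0$ in the limit. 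For the zero boundary values I would replace $u^{n+\2}$ by its continuous (piecewise-multilinear) interpolant from the Appendix, obtaining $\bar v_\delta\in L^2([0,T];H^1_0(\Omega)^3)$, bounded there, with $\bar v_\delta-v_\delta\to0$ in $L^2([0,T];L^2(\Omega)^3)$; since $L^2([0,T];H^1_0(\Omega)^3)$ is a closed, hence weakly closed, subspace of $L^2([0,T];H^1(\Omega)^3)$, the weak limit $v$ lies there. Combined with $\nabla\cdot v=0$ and Proposition~\ref{1key-lemma}, this gives $v\in L^2([0,T];H^1_{0,\sigma}(\Omega))$.

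The main obstacle is the second step: a priori $u_\delta$ and $v_\delta$ need not converge to the same limit, and reconciling them relies precisely on the gain of one power of $\tau$ in $\sum_n\|\D\phi^{n+1}\|_{\Omega_h}^2\tau$, which is where the contractivity of $P_h$ and the energy inequality \eqref{413} are genuinely used. A secondary subtlety is that the step functions are not in $H^1$ and the discrete divergence is not the continuous one, so membership of the limit in $H^1_{0,\sigma}(\Omega)$ must be recovered through the continuous interpolation of the Appendix together with Proposition~\ref{1key-lemma}, rather than by a soft closedness argument applied directly to the $v_\delta$.
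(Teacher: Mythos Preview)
Your argument is correct, and steps 1, 3, 4, 5 (boundedness/extraction, $w^{(i)}=\partial_{x_i}v$, $\nabla\cdot v=0$, and the passage to $H^1_0$ via the Lipschitz interpolant and Proposition~\ref{1key-lemma}) coincide with the paper's own proof almost verbatim.

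The one genuine difference is how you reconcile the two limits $v^{(0)}$ and $v^{(1)}$. The paper does not look at $u^{n+\2}-u^{n+1}$ at all; instead it inserts the discrete Navier--Stokes equation \eqref{fractional-1} directly to write $u^{n+\2}-u^n=\tau\cdot(\text{RHS})$ and then bounds $\sum_n|(u^{n+\2}-u^n,\phi^n)_{\Omega_h}|\tau=O(\tau)$ termwise using the $L^2$-estimates of Theorem~\ref{solvability}. This is a \emph{weak} estimate on $u_\delta-v_\delta$ obtained straight from the equation. Your route instead exploits the Pythagorean identity for $P_h$ to get the stronger fact $\|v_\delta-\tilde u_\delta\|_{L^2_tL^2_x}^2\le C\tau$, and then handles the one-step time shift $\tilde u_\delta$ versus $u_\delta$ by moving the shift onto the test function. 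Your approach is cleaner in that it never touches the nonlinear or diffusion terms and yields a strong (not just weak) vanishing of $v_\delta-\tilde u_\delta$; the paper's approach, on the other hand, avoids introducing the auxiliary shifted function $\tilde u_\delta$ and serves as a warm-up for the manipulations of the discrete equation that reappear in Sections~6 and~7. One minor caveat: the Pythagorean identity from Theorem~\ref{22estimate} is stated over $\Omega_h\setminus\partial\Omega_h$, not $\Omega_h$, but since $u^{n+\2}$ and $u^{n+1}$ both vanish on $\partial\Omega_h$ your computation of $\|v_\delta-\tilde u_\delta\|^2$ matches it exactly.
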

\begin{proof}
Due to Theorem \ref{solvability}, $\{u_\delta{}_j\}$, $\{v_\delta{}_j\}$, $\{w^i_\delta{}_j\}$ ($i,j=1,2,3$) are bounded in the Hilbert space $L^2([0,T];L^2(\Omega))$. Hence, there exists a sequence $\delta\to0$ and functions  $u=(u_1,u_2,u_3),v=(v_1,v_2,v_3),w^i=(w_1^i,w_2^i,w_3^i)\in L^2([0,T];L^2(\Omega)^3)$ such that 
$$u_\delta{}_j \wto u_j,\quad v_\delta{}_j \wto v_j,\quad w^i_\delta{}_j \wto w^i_j \mbox{\quad in $L^2([0,T];L^2(\Omega)^3)$ as $\delta\to0$\quad ($i,j=1,2,3$)}.$$
\indent We prove $u=v$. For each $\phi\in C^\infty([0,T];C^\infty_0(\Omega)^3)$,  we have 
\begin{eqnarray*}
(u-v,\phi)_{L^2([0,T];L^2(\Omega)^3)}&=&(u-u_\delta,\phi)_{L^2([0,T];L^2(\Omega)^3)}+(u_\delta-v_\delta,\phi)_{L^2([0,T];L^2(\Omega)^3)}\\
&&+(v_\delta-v,\phi)_{L^2([0,T];L^2(\Omega)^3)}.
\end{eqnarray*}
The first and third terms go to $0$ as $\delta\to0$. Hence, we may conclude $u=v$ by proving $(u_\delta-v_\delta,\phi)_{L^2([0,T];L^2(\Omega)^3)}\to 0$ as $\delta\to0$.
In fact, setting $\phi^n(\cdot):=\phi(\tau n,\cdot)$ and noting that $\phi(t,\cdot)$ is compactly supported in $\Omega$ (therefore, $\phi(t,\cdot)=0$ on $\partial\Omega_h$ for sufficiently small $h>0$), we have 
\begin{eqnarray*}
&&|(u_\delta-v_\delta,\phi)_{L^2([0,T];L^2(\Omega)^3)}|\le \sum_{n=0}^{T_\tau-1}|(u^{n+\2}-u^n,\phi^n)_{\Omega_h}|\tau+O(h),\\
&&\sum_{n=0}^{T_\tau-1}|(u^{n+\2}-u^n,\phi^n)_{\Omega_h}|\tau\\
&&\quad=\sum_{n=0}^{T_\tau-1}\Big|-\frac{\tau}{2} \sum_{j=1}^3 \Big(u^n_j(\cdot-he^j)D_j^+u^{n+\2}(\cdot-he^j)  +u^n_j(\cdot)D_j^+u^{n+\2}(\cdot), \phi^n\Big)_{\Omega_h}\\
&&\quad+\tau\sum_{j=1}^3  (D_j^2u^{n+\2},\phi^n)_{\Omega_h}+\tau(f^{n+1},\phi^n)_{\Omega_h}   \Big|\tau\\
&&\le \frac{\tau}{2}\max_{[0,T]\times\Omega,i} |\phi_i(t,x)|   
\sum_{n=0}^{T_\tau-1}\sum_{i,j=1}^3(\norm u^n_j\norm_{\Omega_h}^2+\norm D_j^+u_i^{n+\2}\norm_{\Omega_h}^2)\tau \\
&&+\frac{\tau}{2}\sum_{n=0}^{T_\tau-1}\sum_{i,j=1}^3 (\norm D^+_ju_i^{n+\2}\norm_{\Omega_h}^2+\norm D_j^+\phi_i^n\norm_{\Omega_h}^2)\tau 
+\frac{\tau}{2} \sum_{n=0}^{T_\tau-1}  (\norm f^{n+1}\norm_{\Omega_h}^2+\norm \phi^n\norm_{\Omega_h}^2)\tau   \\
&&=O(\tau) \to0\quad \mbox{ as $\delta\to0$},
\end{eqnarray*}
due to Theorem \ref{solvability}.  

We prove $\partial_{x_i} v=w^i$.  
For each $\phi\in C^\infty_0((0,T);C^\infty_0(\Omega))$, setting $\phi^n(\cdot):=\phi(\tau n,\cdot)$, we observe that 
\begin{eqnarray*}
&&\sum_{y\in\Omega_h}D_i^+u^{n+\2}_j(y)\phi^n(y) h^3= \sum_{y\in\Omega_h}\frac{u^{n+\2}_j(y+he^i)-u^{n+\2}_j(y)}{h}\phi^n(y) h^3\\
&&\qquad= \sum_{y\in\Omega_h}\frac{u^{n+\2}_j(y+he^i)\phi^n(y)-u^{n+\2}_j(y+he^i)\phi^n(y+he^i)}{h} h^3\\
&&\qquad=-\sum_{y\in\Omega_h}u^{n+\2}_j(y+he^i)D_i^+\phi^n(y) h^3\\
&&\qquad=-\sum_{y\in\Omega_h}u^{n+\2}_j(y)D_i^+\phi^n(y-he^i) h^3. 
\end{eqnarray*} 
 Hence, noting the regularity of $\phi$ and Theorem \ref{solvability}, we have 
\begin{eqnarray*}
(w^i_\delta{}_j,\phi)_{L^2([0,T];L^2(\Omega))}  &=& \sum_{n=0}^{T_\tau-1}\sum_{y\in\Omega_h}D_i^+u^{n+\2}_j(y)(\phi^n(y)+O(h)) h^3\tau\\
&=&\sum_{n=0}^{T_\tau-1}\sum_{y\in\Omega_h}D_i^+u^{n+\2}_j(y)\phi^n(y)h^3\tau+O(h)\\
&=&-\sum_{n=0}^{T_\tau-1}\sum_{y\in\Omega_h} u^{n+\2}_j(y)D_i^+\phi^n(y) h^3\tau+O(h),\\
(v_{\delta}{}_j,\partial_{x_i}\phi)_{L^2([0,T];L^2(\Omega))}  &=& \sum_{n=0}^{T_\tau-1}\sum_{y\in\Omega_h}u^{n+\2}_j(y)(D_i^+\phi^n(y)+O(h)) h^3\tau\\
&=&\sum_{n=0}^{T_\tau-1}\sum_{y\in\Omega_h}u^{n+\2}_j(y)D_i^+\phi^n(y) h^3\tau+O(h).  
\end{eqnarray*} 
Therefore, the weak convergence implies $(v_j,\partial_{x_i}\phi)_{L^2([0,T];L^2(\Omega))}=-(w^i_j,\phi)_{L^2([0,T];L^2(\Omega))}$ for any $\phi\in C^\infty_0((0,T);C^\infty_0(\Omega))$.  

We prove $\nabla\cdot v(t,\cdot)=0$ a.e. $t\in[0,T]$ in the sense of the weak derivative. For each $\phi\in C^\infty_0((0,T);C^\infty_0(\Omega))$, we have $\phi=0$ on a neighborhood of  $\partial\Omega$ for all sufficiently small $h>0$. Hence, with Lemma \ref{vector-cal} and $\phi^n(\cdot):=\phi(\tau n,\cdot)$, we have   
\begin{eqnarray*}
0&=&\sum_{n=0}^{T_\tau-1}\sum_{y\in\Omega_h}\D\cdot u^{n}(y)\phi^n(y) h^3\tau=-\sum_{n=0}^{T_\tau-1}\sum_{y\in\Omega_h}  u^n(y) \cdot \D\phi^n(y)  h^3\tau\\
&=&-\sum_{i=1}^3 (u_{\delta}{}_i,\partial_{x_i}\phi)_{L^2([0,T];L^2(\Omega))} +O(h)
\to- \sum_{i=1}^3(v_i,\partial_{x_i}\phi)_{L^2([0,T];L^2(\Omega))}\mbox{\quad as $\delta\to0$}.
\end{eqnarray*} 
Therefore, we obtain $(\nabla\cdot v,\phi )_{L^2([0,T];L^2(\Omega))}=- \sum_{i=1}^3(v_i,\partial_{x_i}\phi)_{L^2([0,T];L^2(\Omega))}=0$ for any $\phi\in C^\infty_0((0,T);C^\infty_0(\Omega))$. Up to now, we proved $v\in L^2([0,T];H^1(\Omega)^3)$ and $\nabla\cdot v=0$ a.e. $t\in[0,T]$. 

We prove $v\in L^2([0,T];H^1_0(\Omega)^3)$. Let $\tilde{v}^{n}_\delta:\Omega\to\R^3$ be the Lipschitz interpolation of $u^{n+\2}$ by means of Appendix (1) and let $\tilde{v}_\delta:[0,T]\times\Omega\to\R^3$ be defined as $\tilde{v}_\delta(t,\cdot):=\tilde{v}_\delta^{n}(\cdot)$ for $t\in[\tau n,\tau n+\tau)\cap[0,T]$. Note that 
$$\tilde{v}_\delta\in L^2([0,T];H^1_0(\Omega)^3),\,\,\,\norm \tilde{v}_\delta-v_\delta\norm_{L^2([0,T];L^2(\Omega)^3)}=O(h),\,\,\, \norm \partial_{x_i}\tilde{v}_\delta\norm_{L^2([0,T];L^2(\Omega)^3)}\le K$$
for $i=1,2,3$, where $K$ is a constant independent from $\delta$. Hence, taking a subsequence if necessary, we see that $\tilde{v}_\delta\wto v$ in $L^2([0,T];L^2(\Omega)^3)$  as $\delta\to0$ and that there exists $\tilde{w}^i\in L^2([0,T];L^2(\Omega)^3)$ such that $\partial_{x_i}\tilde{v}_\delta\wto \tilde{w}^i$ in $L^2([0,T];L^2(\Omega)^3)$  as $\delta\to0$ for $i=1,2,3$. Since 
$(\partial_{x_i}\tilde{v}_{\delta j},\phi)_{L^2([0,T];L^2(\Omega))}=-(\tilde{v}_{\delta j},\partial_{x_i}\phi)_{L^2([0,T];L^2(\Omega))}$ for any $\phi\in C^\infty((0,T]);C^\infty_0(\Omega))$, we have 
$  (\tilde{w}^i_j,\phi)_{L^2([0,T];L^2(\Omega))}=-(v_j,\partial_{x_i}\phi)_{L^2([0,T];L^2(\Omega))}$ and $\tilde{w}^i=\partial_{x_i}v$. In particular, 
$$(\tilde{v}_\delta,\phi)_{L^2([0,T];H^1(\Omega)^3)}\to(v,\phi)_{L^2([0,T];H^1(\Omega)^3)} \mbox{ as $\delta\to0$ \,\,\, for any $\phi\in L^2([0,T];H^1(\Omega)^3)$.}$$
Since $\{\tilde{v}_\delta\}$ is a bounded sequence of the Hilbert space $L^2([0,T];H^1_0(\Omega)^3)$, taking a subsequence if necessary, we find $\tilde{v}\in  L^2([0,T];H^1_0(\Omega)^3)$ to which $\tilde{v}_\delta$ weakly converges in $L^2([0,T];H^1_0(\Omega)^3)$ as $\delta\to0$, i.e., 
$$(\tilde{v}_\delta,\phi)_{L^2([0,T];H^1(\Omega)^3)}\to(\tilde{v},\phi)_{L^2([0,T];H^1(\Omega)^3)} \mbox{ as $\delta\to0$ \,\,\, for any $\phi\in L^2([0,T];H^1_0(\Omega)^3)$.}$$  
Therefore, we have 
$(v-\tilde{v},\phi)_{L^2([0,T];H^1(\Omega)^3)}=0\mbox{ \,\,\,for any $\phi\in L^2([0,T];H^1_0(\Omega)^3)$.}$ 
Since $\tilde{v}_\delta-\tilde{v}\in  L^2([0,T];H^1_0(\Omega)^3)$, we obtain 
\begin{eqnarray*}
0&=&(v-\tilde{v},\tilde{v}_\delta-\tilde{v})_{L^2([0,T];H^1(\Omega)^3)}\\
&=&(v-\tilde{v},v-\tilde{v})_{L^2([0,T];H^1(\Omega)^3)}+(v-\tilde{v},\tilde{v}_\delta-v)_{L^2([0,T];H^1(\Omega)^3)}\\
&\to&\norm v-\tilde{v}\norm_{L^2([0,T];H^1(\Omega)^3)}^2\mbox{\quad as $\delta\to0$},
\end{eqnarray*}
which concludes that $v=\tilde{v}\in  L^2([0,T];H^1_0(\Omega)^3)$. 

Thus, with Lemma \ref{1key-lemma}, we see that the limit function $v$ belongs to $L^2([0,T];\tilde{H}^1_{0,\sigma}(\Omega))=L^2([0,T];H^1_{0,\sigma}(\Omega))$.  
\end{proof}
\setcounter{section}{5}
\setcounter{equation}{0}
\section{Strong convergence}
From now on, we always assume the following scaling condition:
\begin{eqnarray}\label{scaling}
\delta=(h,\tau)\to 0 \mbox{ \,\,\, together with $\dis h^{3-\alpha}\le \tau$},
\end{eqnarray}
where  $\alpha\in(0,2]$ is any constant.
We  prove that {\it the weak convergence of $\{v_\delta\}$ in Theorem \ref{weak convergence} is actually strong one}. The idea of our proof is the following: 
\begin{itemize}
\item[(S1)] Suppose that the weakly convergent sequence $\{ v_\delta \}$ obtained in  in Theorem \ref{weak convergence}, which is re-denoted by $\{v_\alpha\}_{\alpha\in\N}$,  is not strongly convergent in $L^2([0,T];L^2(\Omega)^3)$, i.e.,  $\{ v_\alpha \}$ is not a Cauchy sequence in $L^2([0,T];L^2(\Omega)^3)$.
\item[(S2)] Then, there exists $\ep_0>0$ such that for each $m\in\N$ we have $\alpha^1(m),\alpha^2(m)\ge m$ for which 
$0<\ep_0\le \norm v_{\alpha^1(m)}-v_{\alpha^2(m)} \norm_{L^2([0,T];L^2(\Omega)^3)}$
 holds.
\item[(S3)] We will see that $\norm v_{\alpha^1(m)}-v_{\alpha^2(m)}  \norm_{L^2([0,T];L^2(\Omega)^3)}$ is bounded from the above by two different kinds of ``norms''.
\item[(S4)] We are able to estimate  the  ``norms'' to tend to $0$ as $m\to\infty$, only with information on the weak convergence of $\{v_\delta\}$, and we reach a contradiction. 
\end{itemize}    
Note that we do not know if $\{u_\delta\}$ converges to $v$ strongly or not due to the absence of estimates for $D_j^+u^n$.  In order to carry out our idea, we first study an interpolation inequality of a sequence of the sum of two step functions, as well as some inequality obtained from \eqref{fractional-1}. 

Observe that, if $x\in\Omega_h\setminus\partial\Omega_h$ is such that $x\pm he_i\in\Omega_h\setminus\partial\Omega_h$, we have by  \eqref{fractional-1} and the discrete divergence free constraint of $u^n$,  
\begin{eqnarray*}
\D\cdot u^{n+\2}(x)&=&\D\cdot\Big\{-\tau\sum_{j=1}^3 \frac{u^n_j(x-he^j)D_j^+u^{n+\2}(x-he^j)+u^n_j(x)D_j^+u^{n+\2}(x)}{2}\\
&&+\tau\sum_{j=1}^3D_j^2u^{n+\2}(x)   +\tau f^{n+1}(x)\Big\}.
\end{eqnarray*}
Hence, we obtain for each $\phi\in C^\infty_0(\Omega)$ and sufficiently small $h>0$, 
\begin{eqnarray}\nonumber 
&&\Big(\D\cdot \Big(\sum_{j=1}^3D_j^2u^{n+\2}\Big),\phi\Big)_{\Omega_h}=-\Big(\sum_{j=1}^3D_j^2u^{n+\2},\D\phi\Big)_{\Omega_h}=\sum_{j=1}^3(D_j^+u^{n+\2},D_j^+\D\phi)_{\Omega_h},\\
\label{6-key}
&&|(\D\cdot u^{n+\2},\phi)_{\Omega_h}|\le \tau\max_{x\in\Omega} |\nabla \phi | \sum_{j=1}^3 \norm u_j^n \norm_{\Omega_h}\norm D_j^+u^{n+\2} \norm_{\Omega_h}\\\nonumber
&&\!\!\!\!\!\!\!\!\!\!\qquad\qquad\qquad+\tau \norm \D \phi \norm_{\Omega_h} \norm f^{n+1} \norm_{\Omega_h} +\tau\sum_{j=1}^3 \norm D_j^+\D \phi \norm_{\Omega_h}\norm D_j^+u^{n+\2}\norm_{\Omega_h}.
\end{eqnarray}
This estimate implies that normalized $u^{n+\2}$ is asymptotically divergence free in the proof of Lemma \ref{key-lemma} below. 
 
We proceed from (S2).  Let $u^n_{\alpha^i(m)},u^{n+\2}_{\alpha^i(m)}, f^{n+1}_{\alpha^i(m)},h_{\alpha^i(m)},\tau_{\alpha^i(m)}$, etc., be the quantities that yield $v_{\alpha^i(m)}$ ($i=1,2$).  Fix any $t\in[0,T]$ and let $n_{\alpha^i(m)}\in\N$ be such that 
$t\in[\tau_{\alpha^i(m)}n_{\alpha^i(m)},\tau_{\alpha^i(m)}n_{\alpha^i(m)}+\tau_{\alpha^i(m)})$. 
Introduce the following notation: 
\begin{eqnarray*}
&& \nnorm v_{\alpha^i(m)} (t,\cdot)\nnorm:= \Big(\norm u_{\alpha^i(m)}^{n_{\alpha^i(m)}+\2}\norm_{\Omega_{h_{\alpha^i(m)}}}^2+\sum_{j=1}^3\norm D^+_j u_{\alpha^i(m)}^{n_{\alpha^i(m)}+\2}\norm_{\Omega_{h_{\alpha^i(m)}}}^2\\
&&\quad +  \tau_{\alpha^i(m)}\norm f_{\alpha^i(m)}^{n_{\alpha^i(m)}+1}\norm^2_{\Omega_{h_{\alpha^i(m)}}} \Big)^\2, \,\,\,i=1,2,\\
&&\nnorm v_{\alpha^1(m)}(t,\cdot)-v_{\alpha^2(m)}(t,\cdot)\nnorm_{op} 
 \\
 &&\quad := \sup_{\substack{
 \phi\in  C^4_{0,\sigma}(\Omega), \\\\
 \norm \phi \norm_{W^{4,\infty}(\Omega)^3} =1,\\\\ 
  \phi\equiv 0\,\,{\rm on}\,\,C_{2h_{\alpha^i(m)}}(y)\\\\
  {\rm  for \,\,all}\,\,y\in\partial \Omega_{h_{\alpha^i(m)}},\\\\
   i=1,2       }  }   
 \Big|(u_{\alpha^1(m)}^{n_{\alpha^1(m)}+\2},  Q_{h_{\alpha^1(m)}}\phi)_{\Omega_{h_{\alpha^1(m)}}}-(u_{\alpha^2(m)}^{n_{\alpha^2(m)}+\2},  Q_{h_{\alpha^2(m)}}\phi)_{\Omega_{h_{\alpha^2(m)}}}\Big|,\\
 &&Q_h\phi:=\big( (Q_h\phi)_1,(Q_h\phi)_2,(Q_h\phi)_3  \big) \mbox{\quad for $\phi:\Omega\to\R^3$},\\
&&(Q_h\phi)_j:=\phi_j+\frac{h}{2}\frac{\partial \phi_j}{\partial x_j}+\frac{h^2}{12}\frac{\partial^2 \phi_j}{\partial x_j^2} . 
\end{eqnarray*}
It follows from the Taylor expansion that for each $\phi\in  C^4_{0,\sigma}(\Omega)$ there exists a constant $\beta>0$ such that 
\begin{eqnarray}\label{66Taylor}
|\D\cdot(Q_h\phi) (x)|\le \beta h^3 \mbox{ for all $x\in\Omega_h$}.
\end{eqnarray} 
\begin{Lemma}\label{key-lemma}
For each $\eta>0$, there exists $A_\eta>0$  independent of  $t\in[0,T]$ such that 
\begin{eqnarray}\label{key}
&& \norm v_{\alpha^1(m)}(t,\cdot)-v_{\alpha^2(m)}(t,\cdot)\norm_{L^2(\Omega)^3}\le \eta (\nnorm v_{\alpha^1(m)}(t,\cdot)\nnorm+\nnorm v_{\alpha^2(m)}(t,\cdot)\nnorm)\\\nonumber
&&\qquad\qquad\qquad\qquad+A_\eta \nnorm v_{\alpha^1(m)}(t,\cdot)-v_{\alpha^2(m)}(t,\cdot)\nnorm_{op} \quad \mbox{for all $m\in\N$ and $t\in[0,T]$}.
\end{eqnarray} 
\end{Lemma}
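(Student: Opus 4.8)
The plan is a contradiction argument that converts \eqref{key} into a discrete Rellich--Kondrachov compactness together with the asymptotic divergence-freeness of the intermediate velocity recorded in \eqref{6-key}. Suppose \eqref{key} fails: there is $\eta_0>0$ such that for every $k\in\N$ one can choose $m_k\in\N$ and $t_k\in[0,T]$ for which, writing $v^i_k:=v_{\alpha^i(m_k)}(t_k,\cdot)$, $u^i_k:=u^{n^i_k+\2}_{\alpha^i(m_k)}$ with $n^i_k:=n_{\alpha^i(m_k)}$, $h^i_k:=h_{\alpha^i(m_k)}$, $\tau^i_k:=\tau_{\alpha^i(m_k)}$ and $c_k:=\norm v^1_k-v^2_k\norm_{L^2(\Omega)^3}$ (positive, since the right-hand side of \eqref{key} is $\ge0$), one has $c_k>\eta_0(\nnorm v^1_k\nnorm+\nnorm v^2_k\nnorm)+k\,\nnorm v^1_k-v^2_k\nnorm_{op}$. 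Passing to a subsequence we may assume $h^i_k,\tau^i_k\to0$ (otherwise $(m_k)$ has a constant subsequence, and \eqref{key} for the corresponding fixed meshes is a finite-dimensional statement that follows directly). Since $\norm v^i_k\norm_{L^2(\Omega)^3}=\norm u^i_k\norm_{\Omega_{h^i_k}}$ (for $y\in\Omega_{h^i_k}$ the cells $C_{h^i_k}^+(y)$ are pairwise disjoint and contained in $\Omega$), dividing the displayed inequality by $c_k$ shows that the rescaled step functions $\hat v^i_k:=c_k^{-1}v^i_k$ and grid functions $\hat u^i_k:=c_k^{-1}u^i_k$ obey $\norm\hat v^1_k-\hat v^2_k\norm_{L^2(\Omega)^3}=1$, $\norm\hat u^i_k\norm_{\Omega_{h^i_k}}\le\eta_0^{-1}$, $\norm D^+_j\hat u^i_k\norm_{\Omega_{h^i_k}}\le\eta_0^{-1}$ $(j=1,2,3)$, $\tau^i_k\norm f^{n^i_k+1}_{\alpha^i(m_k)}\norm^2_{\Omega_{h^i_k}}\le c_k^2\eta_0^{-2}$, and $\nnorm v^1_k-v^2_k\nnorm_{op}\le c_k/k$.

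First I would extract strongly $L^2$-convergent subsequences by a discrete Rellich argument, exactly as in the $H^1_0$-part of the proof of Theorem \ref{weak convergence}. Let $\tilde v^i_k$ be $c_k^{-1}$ times the Lipschitz interpolation of $u^i_k$ from Appendix (1); these belong to $H^1_0(\Omega)^3$, are bounded there (their gradients being controlled by the $D^+_j\hat u^i_k$), and satisfy $\norm\tilde v^i_k-\hat v^i_k\norm_{L^2(\Omega)^3}=O(h^i_k)$. Since $\Omega$ is a bounded Lipschitz domain, the embedding $H^1_0(\Omega)^3\hookrightarrow\hookrightarrow L^2(\Omega)^3$ is compact, so along a subsequence $\hat v^i_k\to\hat v^i$ strongly in $L^2(\Omega)^3$ with $\hat v^i\in H^1_0(\Omega)^3$ $(i=1,2)$; passing to a further subsequence, the step functions built from $D^+_j\hat u^i_k$ converge weakly in $L^2(\Omega)^3$ and, by the summation-by-parts computation of Theorem \ref{weak convergence}, their limits are $\partial_{x_j}\hat v^i$. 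Strong $L^2$-convergence survives subtraction, so $\norm\hat v^1-\hat v^2\norm_{L^2(\Omega)^3}=1$.

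Second, I would show $\nabla\cdot\hat v^i=0$ a.e. For a scalar $\psi\in C^\infty_0(\Omega)$ and $k$ large, Lemma \ref{vector-cal} and $D^+_j\psi\to\partial_{x_j}\psi$ uniformly give $(\D\cdot\hat u^i_k,\psi)_{\Omega_{h^i_k}}=-\sum_j(\hat u^i_{k,j},D^+_j\psi)_{\Omega_{h^i_k}}\to-\sum_j(\hat v^i_j,\partial_{x_j}\psi)_{L^2(\Omega)}=(\nabla\cdot\hat v^i,\psi)_{L^2(\Omega)}$. On the other hand $(\D\cdot\hat u^i_k,\psi)_{\Omega_{h^i_k}}=c_k^{-1}(\D\cdot u^i_k,\psi)_{\Omega_{h^i_k}}$, and \eqref{6-key} bounds $|(\D\cdot u^i_k,\psi)_{\Omega_{h^i_k}}|$ by $C(\psi)\,\tau^i_k\big(\norm u^{n^i_k}_{\alpha^i(m_k)}\norm_{\Omega_{h^i_k}}\sum_j\norm D^+_ju^i_k\norm_{\Omega_{h^i_k}}+\norm f^{n^i_k+1}_{\alpha^i(m_k)}\norm_{\Omega_{h^i_k}}+\sum_j\norm D^+_ju^i_k\norm_{\Omega_{h^i_k}}\big)$, where $C(\psi)$ absorbs the mesh-independent discrete $L^2$-norms of $\nabla\psi$ and its differences. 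Now $\norm u^{n^i_k}_{\alpha^i(m_k)}\norm_{\Omega_{h^i_k}}\le\norm v^0\norm_{L^2(\Omega)^3}+\sqrt T\norm f\norm_{L^2([0,T];L^2(\Omega)^3)}$ by \eqref{412} (uniformly in $\delta$), while $\norm D^+_ju^i_k\norm_{\Omega_{h^i_k}}\le\nnorm v^i_k\nnorm\le c_k/\eta_0$ and $\tau^i_k\norm f^{n^i_k+1}_{\alpha^i(m_k)}\norm_{\Omega_{h^i_k}}\le c_k\sqrt{\tau^i_k}/\eta_0$, so the factors $c_k$ cancel the prefactor $c_k^{-1}$ and the whole expression is $O(\tau^i_k)\to0$. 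Hence $\nabla\cdot\hat v^i=0$, and therefore $\hat v^1-\hat v^2\in\tilde H^1_{0,\sigma}(\Omega)=H^1_{0,\sigma}(\Omega)\subset L^2_\sigma(\Omega)$ by Proposition \ref{1key-lemma}.

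Finally I would use the $op$-bound. Fix $\phi\in C^\infty_{0,\sigma}(\Omega)\subset C^4_{0,\sigma}(\Omega)$; for $k$ large its support avoids $C_{2h^i_k}(y)$ for all $y\in\partial\Omega_{h^i_k}$ $(i=1,2)$, so $\phi/\norm\phi\norm_{W^{4,\infty}(\Omega)^3}$ is admissible in the supremum defining $\nnorm\cdot\nnorm_{op}$, whence $|(\hat u^1_k,Q_{h^1_k}\phi)_{\Omega_{h^1_k}}-(\hat u^2_k,Q_{h^2_k}\phi)_{\Omega_{h^2_k}}|\le c_k^{-1}\norm\phi\norm_{W^{4,\infty}(\Omega)^3}\,\nnorm v^1_k-v^2_k\nnorm_{op}\le\norm\phi\norm_{W^{4,\infty}(\Omega)^3}/k\to0$. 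Since $Q_h\phi\to\phi$ uniformly (so the step interpolation of $Q_{h^i_k}\phi$ tends to $\phi$ in $L^2(\Omega)^3$) and $\hat v^i_k\to\hat v^i$ in $L^2(\Omega)^3$, the left-hand side converges to $(\hat v^1-\hat v^2,\phi)_{L^2(\Omega)^3}$; thus $(\hat v^1-\hat v^2,\phi)_{L^2(\Omega)^3}=0$ for every $\phi\in C^\infty_{0,\sigma}(\Omega)$. As $\hat v^1-\hat v^2$ itself lies in $L^2_\sigma(\Omega)$, the $L^2$-closure of $C^\infty_{0,\sigma}(\Omega)$, this forces $\hat v^1=\hat v^2$, contradicting $\norm\hat v^1-\hat v^2\norm_{L^2(\Omega)^3}=1$. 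The crux is the divergence-freeness step: one must see that the normalization factor $c_k^{-1}$ in $(\D\cdot\hat u^i_k,\psi)_{\Omega_{h^i_k}}$ is exactly cancelled by the $c_k$ in the bounds $\norm D^+_ju^i_k\norm_{\Omega_{h^i_k}}\le c_k/\eta_0$ and $\tau^i_k\norm f^{n^i_k+1}\norm_{\Omega_{h^i_k}}\le c_k\sqrt{\tau^i_k}/\eta_0$, which works precisely because the a priori $L^2$-bound \eqref{412} on the end-of-step velocity is independent of the normalization; the rest reuses the weak-convergence machinery of Section 5.
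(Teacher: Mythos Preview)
Your proof is correct and follows essentially the same approach as the paper's: a contradiction argument using normalization, the Lipschitz interpolation of Appendix~(1) together with Rellich--Kondrachov for strong $L^2$-compactness, the estimate \eqref{6-key} for asymptotic divergence-freeness of the rescaled intermediate velocity, and the $op$-norm bound to force the two limits to coincide in $L^2_\sigma(\Omega)$. The only differences are cosmetic: you normalize by $c_k=\norm v^1_k-v^2_k\norm_{L^2}$ whereas the paper normalizes by $\nnorm v^1_k\nnorm+\nnorm v^2_k\nnorm$, and you absorb the $t$-uniformity into the single contradiction (dismissing the bounded-$m_k$ case as finite-dimensional) whereas the paper first argues for each fixed $t$ and then bounds $\sup_tA_\eta(t)$ in a separate step.
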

\begin{proof}
First we find $A_\eta$ for each $t\in[0,T]$.  
Set 
$$w^i_m=w^i_m(t):= v_{\alpha^i(m)}(t,\cdot).$$ 
Suppose that the assertion does not hold. Then, there exists some constant $\eta_0>0$ such that  for each $l\in\N$ we can find $m=m(l)\in\N$ such that $m(l)\nearrow\infty$ as $l\to\infty$  and 
\begin{eqnarray}\label{6161}
\norm w^1_{m(l)}-w^2_{m(l)} \norm_{L^2(\Omega)^3}> \eta_0 (\nnorm w_{m(l)}^1\nnorm+\nnorm w_{m(l)}^2\nnorm)+l \nnorm w^1_{m(l)}-w^2_{m(l)}\nnorm_{op}.
\end{eqnarray}
Set 
\begin{eqnarray*}
\tilde{w}_{m(l)}^i:=\frac{w^i_{m(l)}}{\nnorm w_{m(l)}^1\nnorm+\nnorm w_{m(l)}^2\nnorm}=\frac{u_{\alpha^i(m(l))}^{n_{\alpha^i(m(l))}+\2}}{\nnorm w_{m(l)}^1\nnorm+\nnorm w_{m(l)}^2\nnorm}:\Omega_{h_{\alpha^i(m(l))}}\to\R^3.
\end{eqnarray*}
We have 
\begin{eqnarray*}
&& \norm \tilde{w}_{m(l)}^i\norm_{\Omega_{h_{\alpha^i(m(l))}}}\le1,\quad 
\norm D_j^+ \tilde{w}^i_{m(l)}\norm_{\Omega_{h_{\alpha^i(m(l))}}}\le1,\quad j=1,2,3,\quad i=1,2.
\end{eqnarray*}
Let $\omega^i_{m(l)}:\Omega\to\R^3$ be the step function generated by $\tilde{w}_{m(l)}^i$ as 
\begin{eqnarray*}
\omega^i_{m(l)}(x):=\left\{
\begin{array}{lll}
&\tilde{w}^i_{m(l)}(y)\mbox{\quad\quad\ for  $x\in {C_{h_{\alpha^i(m(l))}}^+(y)}$, $y\in \Omega_{h_{\alpha^i(m(l))}}$},
\medskip\\
&0\mbox{\quad\quad\quad\,\,\,\quad\,\,\, otherwise}.
\end{array}
\right. 
\end{eqnarray*} 
Let $\bar{\omega}^i_{m(l)}:\Omega\to\R^3$ be the Lipschitz interpolation of $\omega^i_{m(l)}$ by means of Appendix (1).  We have 
\begin{eqnarray}\label{616161}
&& \norm \bar{\omega}^i_{m(l)}-\omega^i_{m(l)}\norm_{L^2(\Omega)^3}=O(h_{\alpha^i(m(l))}),\\\nonumber
&&\norm \partial_{x_j} \bar{\omega}^i_{m(l)}\norm_{L^2(\Omega)^3} \le K \mbox{ \quad for all $l\in\N$, $i=1,2$, $j=1,2,3$},
\end{eqnarray}
where $K$ is some constant. Hence, we see that $\{\bar{\omega}^i_{m(l)}\}_{l\in\N}$ is a bounded sequence of $H^1_0(\Omega)^3$. Therefore, with reasoning similar to the proof of Theorem \ref{weak convergence},  we find $\bar{\omega}^i\in H^1_0(\Omega)^3$ such that $ \bar{\omega}^i_{m(l)}\wto \bar{\omega}^i$ in $H^1_0(\Omega)^3$ as $l\to\infty$, as well as $ \bar{\omega}^i_{m(l)}\wto \bar{\omega}^i$,  $\partial_{x_j}\bar{\omega}^i_{m(l)}\wto \partial_{x_j}\bar{\omega}^i$ in $L^2(\Omega)^3$ as $l\to\infty$ (up to subsequence). On the other hand, due to the Rellich-Kondrachov theorem, taking a subsequence if necessary, we see that $ \bar{\omega}^i_{m(l)}\to \bar{\omega}^i$ strongly in $L^2(\Omega)^3$ as $l\to\infty$. By \eqref{616161}, we have 
\begin{eqnarray}\label{6015}
\mbox{ $\omega^i_{m(l)}\to \bar{\omega}^i$ strongly in $L^2(\Omega)^3$ as $l\to\infty$.}
 \end{eqnarray}
Furthermore, it follows from \eqref{6-key} and \eqref{412} that for each $\phi\in C^\infty_0(\Omega)$, 
\begin{eqnarray*}
|(\D\cdot \tilde{w}^i_{m(l)},\phi)_{\Omega_{h_{\alpha^i(m(l))}}}|&\le&O( \sqrt{\tau_{\alpha^i(m(l))}})\to0\mbox{ as $l\to\infty$.} 
\end{eqnarray*}
Therefore, we obtain
\begin{eqnarray*}
&&(\D\cdot \tilde{w}^i_{m(l)},\phi)_{\Omega_{h_{\alpha^i(m(l))}}}
=-(\tilde{w}^i_{m(l)},\D\phi)_{\Omega_{h_{\alpha^i(m(l))}}}
=-(\omega^i_{m(l)},\nabla\phi)_{L^2(\Omega)^3} +O(h_{\alpha^i(m(l))})\\
&&\quad \to -(\bar{\omega}^i,\nabla\phi)_{L^2(\Omega)^3} =(\nabla\cdot \bar{\omega}^i,\phi)_{L^2(\Omega)^3}=0
\mbox{ as $l\to\infty$}, 
\end{eqnarray*}
to conclude that $\bar{\omega}:=\bar{\omega}^1-\bar{\omega}^2\in H^1_{0,\sigma}(\Omega)$.

It follows from \eqref{6161} that we have
\begin{eqnarray}\label{key1}
&&2\ge\norm \omega^1_{m(l)}- \omega^2_{m(l)} \norm_{L^2(\Omega)^3}> \eta_0 +l \nnorm \tilde{w}^1_{m(l)} -\tilde{w}^2_{m(l)}\nnorm_{op}\ge\eta_0>0\mbox{\quad for all $l\in\N$},\\\label{key2}
&& \nnorm \tilde{w}^1_{m(l)} -\tilde{w}^2_{m(l)}\nnorm_{op}\to 0\mbox{\quad as $l\to\infty$.}
\end{eqnarray} 
 For each $\phi\in C^4_{0,\sigma}(\Omega)$ with  $\norm \phi \norm_{W^{4,\infty}(\Omega)^3} =1$, we have for sufficiently large $l$, 
\begin{eqnarray*}
\nnorm \tilde{w}^1_{m(l)}\!\!\! &-&\!\!\!\tilde{w}^2_{m(l)}\nnorm_{op}\ge
 \Big|(\tilde{w}_{m(l)}^1,  Q_{h_{\alpha^1(m(l))}}\phi)_{\Omega_{h_{\alpha^1(m(l))}}}-(\tilde{w}_{m(l)}^2,  Q_{h_{\alpha^2(m(l))}}\phi)_{\Omega_{h_{\alpha^2(m(l))}}}\Big|\\
 &=&
 \Big|(\tilde{w}_{m(l)}^1,  \phi )_{\Omega_{h_{\alpha^1(m(l))}}}
 -(\tilde{w}_{m(l)}^2,  \phi )_{\Omega_{h_{\alpha^2(m(l))}}}\\
&& +  (\tilde{w}_{m(l)}^1,  Q_{h_{\alpha^1(m(l))}}\phi -\phi )_{\Omega_{h_{\alpha^1(m(l))}}} -(\tilde{w}_{m(l)}^2,  Q_{h_{\alpha^2(m(l))}}\phi-\phi )_{\Omega_{h_{\alpha^2(m(l))}}}  \Big|\\
&\to&\Big|    (\bar{\omega}^1,\phi)_{L^2(\Omega)^3}-  (\bar{\omega}^2,\phi)_{L^2(\Omega)^3} \Big|\mbox{ as $l\to\infty$ \,\,\,due to \eqref{6015}}.
\end{eqnarray*}
Hence, with \eqref{6015},  (\ref{key1}) and (\ref{key2}),  we obtain      
$$ 0<\eta_0\le\norm \bar{\omega}\norm_{L^2(\Omega)^3},\quad(\bar{\omega},\phi)_{L^2(\Omega)^3}=0\mbox{ for all $\phi\in  C^4_{0,\sigma}(\Omega)$.}$$
Therefore, $\bar{\omega}\neq0$. However,  since $\bar{\omega}\in H^1_{0,\sigma}(\Omega)$, our taking $\omega_l\in C^\infty_{0,\sigma}(\Omega)$ that approximate $\bar{\omega}$ in the $H^1$-norm as $l\to\infty$ yields 
\begin{eqnarray*}
(\bar{\omega},\bar{\omega})_{L^2(\Omega)^3}=(\bar{\omega},\omega_l)_{L^2(\Omega)^3}+(\bar{\omega},\bar{\omega}-\omega_l)_{L^2(\Omega)^3}=(\bar{\omega},\bar{\omega}-\omega_l)_{L^2(\Omega)^3}\to0\mbox{\quad as $l\to\infty$}.
\end{eqnarray*}
This is a contradiction and there exists $A_\eta=A_\eta(t)>0$ for each $t\in[0,T]$ as claimed. 

We prove that  there exists  $A_\eta>0$ independent of the choice of  $t\in[0,T]$.  Fix any $\eta>0$. Let $A^\ast_\eta(t)$ be the infimum of the set $\{A_\eta\,|\, \mbox{\eqref{key} holds} \}$ for each fixed $t$. We will prove that $A^\ast_\eta(t)$ is bounded on $[0,T]$. 
Suppose that  $A^\ast_\eta(t)$ is not bounded. Then, there exists a sequence $\{t_l\}\subset[0,T]$ for which $A_\eta^\ast(t_l)\nearrow\infty$ as $l\to\infty$.  Set $B_l:=A^\ast_\eta(t_l)/2$. For each $l\in \N$, there exists $m(l)$ for which we have   
\begin{eqnarray*}
\norm w^1_{m(l)}(t_l)- w^2_{m(l)}(t_l) \norm_{L^2(\Omega)^3}> \eta (\nnorm  w^1_{m(l)}(t_l)\nnorm+\nnorm  w^2_{m(l)}(t_l) \nnorm)+B_l \nnorm  w^1_{m(l)}(t_l)- w^2_{m(l)}(t_l) \nnorm_{op}.
\end{eqnarray*}
If $\{m(l)\}_{l\in\N}$ is unbounded, noting that $B_l\nearrow\infty$ as $l\to\infty$, we may follow the same reasoning as the above and reach a contradiction.  Suppose that $\{m(l)\}_{l\in\N}$ is bounded. Then, we have a subsequence $\{l_k\}_{k\in\N}\subset \N$ such that $m(l_k)=m_0$ for all $k\in\N$. Since $w^1_{m_0}(t)- w^2_{m_0}(t_l)$ is a step function in $t$ with a finite number of different values, we have  a constant $A_\eta>0$ such that  
$$\norm w^1_{m_0}(t)- w^2_{m_0}(t) \norm_{L^2(\Omega)^3}\le \eta (\nnorm w^1_{m_0}(t)\nnorm+\nnorm w^2_{m_0}(t) \nnorm)+A_\eta \nnorm w^1_{m_0}(t)-w^2_{m_0}(t) \nnorm_{op} $$
for all $t\in[0,T]$. However, we already obtained 
$$\norm w^1_{m_0}(t_l)- w^2_{m_0}(t_l)\norm_{L^2(\Omega)^3}> \eta (\nnorm w^1_{m_0}(t_l)\nnorm+\nnorm w^1_{m_0}(t_l) \nnorm)+B_{l_k} \nnorm w^1_{m_0}(t_l)- w^2_{m_0}(t_l) \nnorm_{op}$$
 with $B_{l_k}\to\infty$ as $k\to\infty$, and we reach a contradiction.
\end{proof}
Now, we are ready to state the result on strong convergence. 
\begin{Thm}\label{strong-convergence}
Suppose that $\delta=(h,\tau)\to0$ with the scaling condition (\ref{scaling}). Then, the sequence $\{v_\delta\}$, which is  defined in Section 5 and is weakly convergent to the  limit $v$, converges to $v$ strongly in $L^2([0,T];L^2(\Omega)^3)$.    
\end{Thm}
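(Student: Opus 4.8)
The plan is to run the contradiction scheme (S1)--(S4) and close it using the interpolation inequality in Lemma \ref{key-lemma} together with the a priori estimates of Theorem \ref{solvability}. So suppose $\{v_\delta\}$ is not Cauchy in $L^2([0,T];L^2(\Omega)^3)$; renaming the weakly convergent subsequence as $\{v_\alpha\}_{\alpha\in\N}$, fix $\varepsilon_0>0$ and indices $\alpha^1(m),\alpha^2(m)\ge m$ with $\varepsilon_0\le\norm v_{\alpha^1(m)}-v_{\alpha^2(m)}\norm_{L^2([0,T];L^2(\Omega)^3)}$ for all $m$. I would square this, write it as $\int_0^T\norm v_{\alpha^1(m)}(t,\cdot)-v_{\alpha^2(m)}(t,\cdot)\norm_{L^2(\Omega)^3}^2\,dt$, and insert Lemma \ref{key-lemma} pointwise in $t$ with a parameter $\eta$ to be chosen small: using $(a+b)^2\le 2a^2+2b^2$,
\begin{eqnarray*}
\varepsilon_0^2&\le&\int_0^T\!\Big(\eta(\nnorm v_{\alpha^1(m)}(t,\cdot)\nnorm+\nnorm v_{\alpha^2(m)}(t,\cdot)\nnorm)+A_\eta\nnorm v_{\alpha^1(m)}(t,\cdot)-v_{\alpha^2(m)}(t,\cdot)\nnorm_{op}\Big)^2dt\\
&\le&4\eta^2\!\int_0^T\!\big(\nnorm v_{\alpha^1(m)}(t,\cdot)\nnorm^2+\nnorm v_{\alpha^2(m)}(t,\cdot)\nnorm^2\big)dt+2A_\eta^2\!\int_0^T\!\nnorm v_{\alpha^1(m)}(t,\cdot)-v_{\alpha^2(m)}(t,\cdot)\nnorm_{op}^2dt.
\end{eqnarray*}

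Next I would bound the two terms on the right. For the first, observe that $\int_0^T\nnorm v_{\alpha^i(m)}(t,\cdot)\nnorm^2\,dt$ telescopes into a sum over the time steps of $\norm u^{n+\2}\norm_{\Omega_h}^2+\sum_j\norm D_j^+u^{n+\2}\norm_{\Omega_h}^2+\tau\norm f^{n+1}\norm_{\Omega_h}^2$ times $\tau$, and estimate \eqref{413} of Theorem \ref{solvability} (combined with \eqref{412} and the $L^2$-bounds on $f^{n+1}$) gives a bound $C$ independent of $m$; so this term is $\le 8\eta^2 C$. I choose $\eta$ small enough that $8\eta^2 C<\varepsilon_0^2/2$, fixing $A_\eta$. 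It then remains to show the ``$op$''-term $\int_0^T\nnorm v_{\alpha^1(m)}(t,\cdot)-v_{\alpha^2(m)}(t,\cdot)\nnorm_{op}^2\,dt\to0$ as $m\to\infty$, which would force $\varepsilon_0^2\le\varepsilon_0^2/2$, a contradiction.

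The substance is the last convergence, and this is where I expect the main work. By definition $\nnorm\cdot\nnorm_{op}$ is a supremum over $\phi\in C^4_{0,\sigma}(\Omega)$ with $\norm\phi\norm_{W^{4,\infty}(\Omega)^3}=1$ (vanishing near $\partial\Omega_{h}$) of the difference of two discrete pairings $(u^{n+\2},Q_h\phi)_{\Omega_h}$; the correction operator $Q_h$ is designed via Taylor expansion so that $Q_h\phi$ is \emph{discretely} almost divergence free, \eqref{66Taylor}. The idea is that for a genuinely divergence-free test field the quantity $(u^{n+\2},Q_h\phi)_{\Omega_h}$ is, up to $O(h)$ and up to discrete-divergence errors controlled through \eqref{6-key}, essentially the pairing of $v_\delta$ against $\phi$ in $L^2([0,T]\times\Omega)$; since both $v_{\alpha^1(m)}$ and $v_{\alpha^2(m)}$ converge weakly to the same $v$, these pairings have a common limit, so the difference tends to $0$ --- and one needs this uniformly over the unit $W^{4,\infty}$-ball, which is where the compact embedding $W^{4,\infty}_0\hookrightarrow$ (something with equicontinuous discretization error) and the scaling condition $h^{3-\alpha}\le\tau$ enter, the latter to kill the $\tau^{-1}$-type factors hidden in the discrete time-difference when one passes from $u^{n+\2}$ back to a statement about weak limits. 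I would make this rigorous by: (a) fixing a countable dense family in the unit ball and using an Arzel\`a--Ascoli / diagonal argument, or (b) more cleanly, arguing by contradiction again --- if the $op$-integral did not go to $0$, extract $t_m$ and $\phi_m$ nearly attaining the sup, pass $\phi_m\to\phi_\ast$ in $C^3$ (Arzel\`a--Ascoli on the $W^{4,\infty}$-ball), and show the limiting pairing against the common weak limit $v(t_\ast,\cdot)$ vanishes, contradicting the lower bound. The $Q_h$-correction, the error terms from \eqref{6-key}, and making the convergence \emph{uniform in $\phi$ and in $t$} are the delicate points; everything else is bookkeeping with Theorem \ref{solvability}.
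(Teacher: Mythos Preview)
Your overall scaffold (contradiction, Lemma~\ref{key-lemma}, the bound on $\int_0^T\nnorm\cdot\nnorm^2\,dt$ via Theorem~\ref{solvability}) matches the paper. The gap is in the final step: your plan for showing $\int_0^T\nnorm v_{\alpha^1(m)}(t,\cdot)-v_{\alpha^2(m)}(t,\cdot)\nnorm_{op}^2\,dt\to0$ does not work as written, and the missing idea is substantive.

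The quantity inside $\nnorm\cdot\nnorm_{op}$ is a pairing at a \emph{single} time level, $(u^{n(t)+\frac12},Q_h\phi)_{\Omega_h}$. Weak convergence of $\{v_\alpha\}$ is only in $L^2([0,T];L^2(\Omega)^3)$ and controls space--time integrals $\int_0^T(v_\alpha(s,\cdot),\Psi(s,\cdot))\,ds$; it says nothing about a fixed-$t$ pairing. Your option (b) (``show the limiting pairing against $v(t_\ast,\cdot)$ vanishes'') presupposes pointwise-in-$t$ convergence you do not have. The Arzel\`a--Ascoli argument on $\phi$ is a red herring: uniformity in $\phi$ is automatic because all error constants depend only on $\norm\phi\norm_{W^{4,\infty}}=1$. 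The genuine obstruction is \emph{time}, not $\phi$.

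The paper resolves this with a discrete integration-by-parts in time: fix $t$, pick an auxiliary $\tilde t>t$, and write $u^{n(t)+\frac12}=a+b$, where $a$ is the time average of $u^{n+\frac12}$ over $n$ corresponding to $[t,\tilde t]$ and $b$ collects the increments $\tau^{-1}(u^{n+1+\frac12}-u^{n+\frac12})$. The pairing $(a,Q_h\phi)$ is now a genuine time integral, so the common weak limit in $L^2([0,T];L^2)$ makes $(a_{l(m)},Q_{h_{l(m)}}\phi)-(a_{k(m)},Q_{h_{k(m)}}\phi)\to0$, uniformly in $\phi$. For $(b,Q_h\phi)$ one substitutes the scheme: $u^{n+\frac12}-u^{n-\frac12}$ equals $\tau\times$(advection $+$ diffusion $+$ force) minus the pressure gradient $\mathcal D\psi^n$ from the projection. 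The advection, diffusion and force contributions sum to $O(\sqrt{\tilde t-t})$ via \eqref{412}--\eqref{413}; the pressure term is where the scaling is actually used, via $|(\mathcal D\psi^n,Q_h\phi)|=|(\psi^n,\mathcal D\cdot Q_h\phi)|\le \beta h^3\norm\psi^n\norm$ from \eqref{66Taylor}, so that $\tau^{-1}|(\mathcal D\psi^n,Q_h\phi)|\le C h^3/\tau\le C h^\alpha$. One first chooses $\tilde t$ close to $t$, then sends $m\to\infty$, obtaining $\nnorm\cdot\nnorm_{op}\to0$ for each fixed $t$; dominated convergence (the $op$-norm is uniformly bounded by \eqref{412}) finishes the integral. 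Without this $a+b$ time-averaging trick your argument cannot close.
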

\begin{proof}
We re-write $\{v_\delta\}$  as $\{v_\alpha\}_{\alpha\in\N}$, where each $v_\alpha$ is defined by the difference solution $u^{n+\2}_\alpha:\Omega_{h_\alpha}\to\R^3$,  $n=0,1,2,\ldots,T_{\tau_{\alpha}}$ of the discrete Navier-Stokes equations. Suppose that $\{v_\alpha\}$ does not converge to $v$ strongly in $L^2([0,T];L^2(\Omega)^3)$ as $\alpha\to\infty$. Then, $\{v_\alpha\}$ is not a Cauchy sequence in $L^2([0,T];L^2(\Omega)^3)$, namely  there exists $\ep_0>0$ such that  for each $m\in \N$ there exist $\alpha^1(m),\alpha^2(m)\ge m$ for which  
 $0<\ep_0\le \norm v_{\alpha^1(m)}-v_{\alpha^2(m)}\norm_{L^2([0,T];L^2(\Omega)^3)}$ holds.  Lemma \ref{key-lemma} yields 
\begin{eqnarray*}
0&<&\ep_0\le \norm v_{\alpha^1(m)}-v_{\alpha^2(m)}\norm_{L^2([0,T];L^2(\Omega)^3)}\\
&\le& \eta \underline{\Big\{ \Big(\int_0^T\nnorm v_{\alpha^1(m)}(t,\cdot)\nnorm^2 dt \Big)^\2+ \Big(\int_0^T\nnorm v_{\alpha^2(m)}(t,\cdot)\nnorm^2 dt \Big)^\2  \Big\}}_{\rm(\ast)} \\
&&+A_\eta\Big(  \int_0^T\nnorm v_{\alpha^1(m)}(t,\cdot)-v_{\alpha^2(m)}(t,\cdot)\nnorm_{op}^2dt  \Big)^\2 \mbox{\quad for all $m\in\N$},
\end{eqnarray*} 
where $\eta>0$ is arbitrary and $A_\eta$ is a constant. Since
\begin{eqnarray*}
\int_0^{T}\nnorm v_{\alpha^i(m)}(t,\cdot)\nnorm^2dt &\le& 
\sum_{0\le n\le T_{\tau_{\alpha^i(m)}}} 
\Big(\norm u^{n+\2}_{\alpha^i(m)}\norm_{\Omega_{h_{\alpha^i(m)}}}^2
 +\sum_{j=1}^3\norm D^+_j u^{n+\2}_{\alpha^i(m)}\norm_{\Omega_{h_{\alpha^i(m)}}}^2 \\
 &&+ \tau_{\alpha^i(m)}\norm f_{\alpha^i(m)}^{n+1}\norm^2_{\Omega_{h_{\alpha^i(m)}}}  \Big)\tau_{\alpha^i(m)}, 
 \end{eqnarray*}
Theorem \ref{solvability} implies that the term ($\ast$) is bounded independently from $m$. Hence, $\eta\times$($\ast$) can be arbitrarily small. If we prove $\nnorm v_{\alpha^1(m)}(t,\cdot)-v_{\alpha^2(m)}(t,\cdot)\nnorm_{op}\to0$ as $m\to\infty$ for each $t\in(0,T)$, we reach a contradiction and the proof is done. 

Set 
$$l(m):=\alpha^1(m), \quad k(m):=\alpha^2(m).$$
  Fix $t\in(0,T)$. Let $n_{l(m)}$ be such that $t\in[\tau_{l(m)}n_{l(m)},\tau_{l(m)}(n_{l(m)}+1))$. For $\tilde{t}\in (t,T)$, let $\tilde{n}_{l(m)}$ be such that $\tilde{t}\in[\tau_{l(m)}(\tilde{n}_{l(m)}+1),\tau_{l(m)}(\tilde{n}_{l(m)}+2))$, where we will later choose $\tilde{t}$ close enough to $t$.  Define   
\begin{eqnarray*}
a_{l(m)}&:=&\frac{1}{\tau_{l(m)}(\tilde{n}_{l(m)}-n_{l(m)})}\sum_{n=n_{l(m)}+1}^{\tilde{n}_{l(m)}} u_{l(m)}^{n+\2} \tau_{l(m)},\\
b_{l(m)}&:=&\frac{1}{\tau_{l(m)}(\tilde{n}_{l(m)}-n_{l(m)})}\sum_{n=n_{l(m)}}^{\tilde{n}_{l(m)}-1} \tau_{l(m)}(n-\tilde{n}_{l(m)}) \frac{u_{l(m)}^{n+1+\2}-u^{n+\2}_{l(m)} }{\tau_{l(m)}} \tau_{l(m)}\\
&=&\frac{1}{\tilde{n}_{l(m)}-n_{l(m)}}\sum_{n=n_{l(m)}}^{\tilde{n}_{l(m)}-1}\Big\{ ((n+1)-\tilde{n}_{l(m)}) u_{l(m)}^{n+1+\2}- (n-\tilde{n}_{l(m)})u^{n+\2}_{l(m)} \Big\}-a_{l(m)}. 
\end{eqnarray*}
Then, we have $u_{l(m)}^{n_{l(m)}+\2}=a_{l(m)}+b_{l(m)}$ (c.f., integration by parts). We introduce $n_{k(m)}$, $\tilde{n}_{k(m)}$, $a_{k(m)}$ and  $b_{k(m)}$ in the same way with the same $t$ and $\tilde{t}$, to have $u_{k(m)}^{n_{k(m)}+\2}=a_{k(m)}+b_{k(m)}$. Setting $\phi_{l(m)}:=Q_{h_{l(m)}}\phi$, $\phi_{k(m)}:=Q_{h_{k(m)}}\phi$, we observe that 
\begin{eqnarray*}
&&\nnorm v_{l(m)}(t,\cdot)-v_{k(m)}(t,\cdot)\nnorm_{op}\\
&&= \sup_{\substack{\phi\in  C^4_{0,\sigma}(\Omega), \norm \phi \norm_{W^{4,\infty}(\Omega)^3} =1,\\ \phi=0\,\,{\rm on}
\,\,C_{2h_{l(m)}}(y)\,\,{\rm for \,\,all}\,\,y\in\partial\Omega_{h_{l(m)}},\\
\phi=0\,\,{\rm on}
\,\,C_{2h_{k(m)}}(y)\,\,{\rm for \,\,all}\,\,y\in\partial\Omega_{h_{k(m)}}
}}
\Big| (u_{l(m)}^{n_{l(m)}+\2},\phi_{l(m)})_{\Omega_{h_{l(m)}}} -  (u_{k(m)}^{n_{k(m)}+\2},\phi_{k(m)})_{\Omega_{h_{k(m)}}}\Big|,\\
&&\Big| (u_{l(m)}^{n_{l(m)}+\2},\phi_{l(m)})_{\Omega_{h_{l(m)}}} -  (u_{k(m)}^{n_{k(m)}+\2},\phi_{k(m)})_{\Omega_{h_{k(m)}}}\Big|\\
&&\le\Big|(a_{l(m)},\phi_{l(m)})_{\Omega_{h_{l(m)}}}-(a_{k(m)},\phi_{k(m)})_{\Omega_{h_{k(m)}}}\Big|+\Big| (b_{l(m)},\phi_{l(m)})_{\Omega_{h_{l(m)}}}\Big|\\
&&\quad+\Big|(b_{k(m)},\phi_{k(m)})_{\Omega_{h_{k(m)}}} \Big|.
\end{eqnarray*}
We estimate $| (b_{l(m)},\phi_{l(m)})_{\Omega_{h_{l(m)}}}|$ for each admissible test function $\phi$ with the discrete Navier-Stokes equations  \eqref{initial-1}-\eqref{fractional-3}. Hereafter, $\beta_1,\beta_2,\ldots$ are some constant independent of $m$. The discrete Helmholtz-Hodge decomposition yields $\psi^n:\Omega_{h_{l(m)}}\to\R$ such that  
\begin{eqnarray}\nonumber
u^{n}_{l(m)}(x)&=&u^{n-1+\2}_{l(m)}(x)-\D\psi^n(x),\quad x\in\Omega_{h_{l(m)}}\setminus\partial\Omega_{h_{l(m)}},\\\label{65551}
u^{n+\2}_{l(m)}(x)&=&u^{n-1+\2}_{l(m)}(x)-\tau_{l(m)} \sum_{j=1}^3\Big\{  \frac{1}{2}\Big(u^n_{l(m)j}(x-he^j)D_j^+u_{l(m)}^{n+\2}(x-h_{l(m)}e^j)\\\nonumber
&&+u^n_{l(m)j}(x)D_j^+u_{l(m)}^{n+\2}(x)\Big)-D_j^2u_{l(m)}^{n+\2} (x)\Big\}\\\nonumber
&&+\tau_{l(m)}f^{n+1}_{l(m)}(x)-\D\psi^n(x),
\mbox{\quad  $x\in\Omega_{h_{l(m)}}\setminus\partial\Omega_{h_{l(m)}}$},\\\nonumber
\sum_{x\in\Omega_{h_{l(m)}}\setminus\partial\Omega_{h_{l(m)}}} |\D\psi^n|^2&\le&\sum_{x\in\Omega_{h_{l(m)}}} | u^{n-1+\2}_{l(m)}(x)|^2.
\end{eqnarray}
By  Lemma \ref{vector-cal}, \eqref{66Taylor}, Theorem \ref{22estimate} and  Theorem \ref{solvability}, we have 
\begin{eqnarray*}
&&\frac{1}{\tau_{l(m)}}|(\D\psi^n,\phi_{l(m)} )_{h_l(m)}|= \frac{1}{\tau_{l(m)}}\Big|\sum_{x\in \Omega_{h_{l(m)}}\setminus\partial\Omega_{h_{l(m)}}}\psi^n(x)\D\cdot \phi_{l(m)}(x)h_{l(m)}^3\Big|  \\
&&\le \beta_1  \frac{h_{l(m)}^3}{\tau_{l(m)}}\Big\{\sum_{x\in\Omega_{h_{l(m)}}\setminus\partial\Omega_{h_{l(m)}}} |\psi^n(x)|^2h_{l(m)}^3\Big\}^\2\\
&&\le\beta_2  \frac{h_{l(m)}^3}{\tau_{l(m)}}\Big\{\sum_{x\in\Omega_{h_{l(m)}}} |\D\psi^n|^2h_{l(m)}^3\Big\}^\2\\
&&\le \beta_3 \frac{h_{l(m)}^3}{\tau_{l(m)}}
\le\beta_3h_{l(m)}^\alpha,
\end{eqnarray*}
where we note that $\phi\equiv0$ near $\partial\Omega$.  Hence, noting again that $\phi\equiv0$ near $\partial\Omega$ and with \eqref{65551}, we have 
\begin{eqnarray*}
&&| (b_{l(m)},\phi_{l(m)})_{\Omega_{h_{l(m)}}}|\le \sum_{n=n_{l(m)}+1}^{\tilde{n}_{l(m)}} \Big|\Big(\frac{u_{l(m)}^{n+\2}-u^{n-1+\2}_{l(m)} }{\tau_{l(m)}},\phi_{l(m)}\Big)_{\Omega_{h_{l(m)}}} \Big|\tau_{l(m)}\\
&&\le \beta_4 h_{l(m)}^\alpha+  \frac{1}{2} \underline{\sum_{n=n_{l(m)}+1}^{\tilde{n}_{l(m)}}  
\Big|\sum_{j=1}^3  \Big(u^n_{l(m)j}(\cdot-he^j)D_j^+u_{l(m)}^{n+\2}(\cdot-h_{l(m)}e^j)}\\
 &&\underline{+u^n_{l(m)j}(\cdot)D_j^+u_{l(m)}^{n+\2}(\cdot),\phi_{l(m)}(\cdot)\Big)_{\Omega_{h_{l(m)}}}   \Big|\tau_{l(m)}  }_{R_1} \\
&&+  \underline{ \sum_{n=n_{l(m)}+1}^{\tilde{n}_{l(m)}}\Big|  \sum_{j=1}^3( D_j^2u_{l(m)}^{n+\2} ,\phi_{l(m)})_{\Omega_{h_{l(m)}}}   \Big|\tau_{l(m)}  }_{R_2}
+\underline{\sum_{n=n_{l(m)}+1}^{\tilde{n}_{l(m)}}  \Big|  (f_{l(m)}^{n+1},\phi_{l(m)})_{\Omega_{h_{l(m)}}}  \Big|\tau_{l(m)}  }_{R_3}.
\end{eqnarray*}
We estimate the terms $R_1,R_2,R_3$: Noting that $\norm\phi\norm_{W^{4,\infty}(\Omega)^3}= 1$, we obtain with \eqref{412} and \eqref{413},  
\begin{eqnarray*}
R_1&\le&\beta_5\sum_{n=n_{l(m)}+1}^{\tilde{n}_{l(m)}}  \sum_{j=1}^3\norm u^n_{l(m)} \norm_{h_{l(m)}} \norm D_j^+u^{n+\2}_{l(m)}\norm_{h_{l(m)}}\tau_{l(m)}  \\
&\le& \beta_6\sum_{n=n_{l(m)}+1}^{\tilde{n}_{l(m)}} \sum_{j=1}^3 \norm D_j^+u^{n+\2}_{l(m)}\norm_{h_{l(m)}}\tau_{l(m)} \le \beta_7 \sqrt{\tilde{t}-t},\\
R_2&=&   \sum_{n=n_{l(m)}+1}^{\tilde{n}_{l(m)}}\Big|  \sum_{j=1}^3( D_j^+u_{l(m)}^{n+\2} ,D_j^+\phi_{l(m)})_{h_{m(l)}}    \Big|\tau_{l(m)} \\
&\le& \beta_8\sum_{n=n_{l(m)}+1}^{\tilde{n}_{l(m)}} \sum_{j=1}^3 \norm D_j^+u^{n+\2}_{l(m)}\norm_{h_{l(m)}}\tau_{l(m)}\le\beta_9\sqrt{\tilde{t}-t}\\
R_3&\le& \beta_{10}\sum_{n=n_{l(m)}+1}^{\tilde{n}_{l(m)}} \norm f_{l(m)}^{n+1}\norm_{h_{l(m)}}\tau_{l(m)}  \le \beta_{11}\sqrt{\tilde{t}-t}.
\end{eqnarray*} 
Therefore, we see that for any $\ep>0$ there exists $\tilde{t}>t$ such that $| (b_{l(m)},\phi_{l(m)})_{\Omega_{h_{l(m)}}}|<\ep$ as $m\to\infty$ for all admissible $\phi$, which holds for $| (b_{k(m)},\phi_{k(m)})_{\Omega_{h_{k(m)}}}|$ as well. 
We fix such $\tilde{t}$. Since $\{v_\alpha\}_{\alpha\in\N}$ weakly converges to $v$ as $\alpha\to\infty$, we have
\begin{eqnarray*}
&&\!\!\!\!\!\!\!\!\!\!\Big|(a_{l(m)},\phi_{l(m)})_{\Omega_{h_{l(m)}}}-(a_{k(m)},\phi_{k(m)})_{\Omega_{h_{k(m)}}}\Big|=\Big|(a_{l(m)},\phi_{l(m)})_{\Omega_{h_{l(m)}}}\\
&&\quad-  \frac{1}{\tilde{t}-t}  \int_t ^{\tilde{t}} (v(s,\cdot),\phi)_{L^2(\Omega)^3} ds 
+  \frac{1}{\tilde{t}-t}  \int_t ^{\tilde{t}} (v(s,\cdot),\phi)_{L^2(\Omega)^3} ds-(a_{k(m)},\phi_{k(m)})_{\Omega_{h_{k(m)}}}\Big|\\
&&\le \beta_{12} (\tau_{l(m)}+ \tau_{k(m)}) +\Big| \frac{1}{\tilde{t}-t}  \int_t ^{\tilde{t}} (v_{l(m)}(s,\cdot)-v(s,\cdot),\phi)_{L^2(\Omega)^3} ds\Big| \\
&&\quad+ \Big| \frac{1}{\tilde{t}-t}  \int_t ^{\tilde{t}} (v_{k(m)}(s,\cdot)-v(s,\cdot),\phi)_{L^2(\Omega)^3} ds\Big|\quad\to0\mbox{\quad as $m\to\infty$,}
\end{eqnarray*}
where it is easy to check that the convergence is uniform with respect to $\phi\in C^4_{0,\sigma}(\Omega)$ with $  \norm \phi \norm_{W^{4,\infty}(\Omega)^3} =1$. Thus, we conclude that $\nnorm v_{\alpha^1(m)}(t,\cdot)-v_{\alpha^2(m)}(t,\cdot)\nnorm_{op}\to0$ as $m\to\infty$ for each $t\in(0,T)$ and we reach a contradiction.
\end{proof}

\setcounter{section}{6}
\setcounter{equation}{0}
\section{Convergence to a Leray-Hopf weak solution}

We prove that the limit $v$ of $\{u_\delta\}$ and $\{v_\delta\}$ is a Leray-Hopf weak solution of \eqref{NS}. For this purpose, we change the finite difference equations into a weak form. 

Fix an arbitrary test function $\phi\in C^\infty_0([-1,T);C^\infty_{0,\sigma}(\Omega))$. Set $\phi^n:=Q_h\phi(\tau n,\cdot):\Omega_h\to\R^3$, $n=0,1,\ldots,T_\tau$ where $Q_h$ is introduced in Section 6. Note that $\phi\equiv0$ near $\partial \Omega$ and near $t=T$. For each $n$, there exists $\psi^{n+1}:\Omega_h\to\R$ such that 
\begin{eqnarray}\label{weak-form}
(u^{n+1},\phi^{n+1})_{\Omega_h}=(u^{n+\2},\phi^{n+1})_{\Omega_h}-(\D\psi^{n+1},\phi^{n+1})_{\Omega_h}.
\end{eqnarray}
As we observed in the proof of Theorem \ref{strong-convergence}, we have 
$$|(\D\psi^{n+1},\phi^{n+1})_{\Omega_h}|=O(h^\alpha)\tau.$$
With the discrete Navier-Stokes equations, we have
\begin{eqnarray*}
&&\!\!\!\!\!\!\!\!\!\!(u^{n+\2},\phi^{n+1})_{\Omega_h}=(u^n,\phi^{n})_{\Omega_h}+(u^n,\frac{\phi^{n+1}-\phi^n}{\tau})_{\Omega_h}\tau\\
&&-\sum_{j=1}^3 \big(u^n_j(\cdot-he^j)D_j^+u^{n+\2}(\cdot-he^j)+ u^n_j(\cdot)D_j^+u^{n+\2}(\cdot),\phi^{n+1}\big)_{\Omega_h}\frac{\tau}{2}\\
&&- \sum_{j=1}^3(D^+_ju^{n+\2},D^+_j\phi^{n+1})_{\Omega_h}\tau+( f^{n+1},\phi^{n+1})_{\Omega_h}\tau.
\end{eqnarray*}
Due to the discrete divergence free constraint of $u^n$ and $\phi^{n+1}$ being $0$ near $\partial\Omega$, we have 
\begin{eqnarray*}
&&\sum_{j=1}^3 \big(u^n_j(\cdot-he^j)D_j^+u^{n+\2}(\cdot-he^j)+ u^n_j(\cdot)D_j^+u^{n+\2}(\cdot),\phi^{n+1}\big)_{\Omega_h}\\
&&=\sum_{i,j=1}^3\sum_{x\in\Omega_h} 
\Big(u^n_j(x-he^j)\frac{u^{n+\2}_i(x)-u^{n+\2}_i(x-he^j)}{h}\\
&&\qquad+u^n_j(x)\frac{u^{n+\2}_i(x+he^j)-u^{n+\2}_i(x)}{h} \Big)\phi^{n+1}_i(x)h^3\\
&&=\sum_{i,j=1}^3 \sum_{x\in\Omega_h\setminus\partial\Omega_h}
-\frac{u^n_j(x)-u^n_j(x-he^j)}{h}u^{n+\2}_i(x)\phi^{n+1}_i(x)h^3\\
&&\qquad+\sum_{i,j=1}^3 \sum_{x\in\Omega_h}
\frac{1}{h}u^n_j(x)u^{n+\2}_i(x+he^j)\phi^{n+1}_i(x)h^3\\
&&\qquad-\sum_{i,j=1}^3 \sum_{x\in\Omega_h} \frac{1}{h}u^n_j(x-he^j)u^{n+\2}_i(x-he^j)\phi^{n+1}_i(x)h^3\\
&&=\sum_{i,j=1}^3 \sum_{x\in\Omega_h}
\frac{1}{h}u^n_j(x)u^{n+\2}_i(x+he^j)\phi^{n+1}_i(x)h^3\\
&&\qquad-\sum_{i,j=1}^3 \sum_{x\in\Omega_h} \frac{1}{h}u^n_j(x)u^{n+\2}_i(x)\phi^{n+1}_i(x+he^j)h^3\\
&&=-\sum_{i,j=1}^3 \sum_{x\in\Omega_h} \frac{1}{h}(u^n_j(x)u^{n+\2}_i(x)\phi^{n+1}_i(x+he^j)  -     u^n_j(x)u^{n+\2}_i(x)\phi^{n+1}_i(x) )h^3\\
&&\qquad+\sum_{i,j=1}^3 \sum_{x\in\Omega_h}
\frac{1}{h}(u^n_j(x)u^{n+\2}_i(x+he^j)\phi^{n+1}_i(x)  -u^n_j(x)u^{n+\2}_i(x)\phi^{n+1}_i(x) )h^3\\
&&=-\sum_{i,j=1}^3 \sum_{x\in\Omega_h} u^n_j(x)u^{n+\2}_i(x)D_j^+\phi^{n+1}_i(x)h^3\\
&&\qquad+\sum_{i,j=1}^3 \sum_{x\in\Omega_h}
\frac{1}{h}(u^n_j(x-he^j)u^{n+\2}_i(x)\phi^{n+1}_i(x-he^j)  -u^n_j(x)u^{n+\2}_i(x)\phi^{n+1}_i(x) )h^3\\
&&=-\sum_{i,j=1}^3 \sum_{x\in\Omega_h} u^n_j(x)u^{n+\2}_i(x)D_j^+\phi^{n+1}_i(x)h^3\\
&&\qquad+\sum_{i,j=1}^3 \sum_{x\in\Omega_h}
\frac{1}{h}(u^n_j(x-he^j)u^{n+\2}_i(x)(\phi^{n+1}_i(x-he^j) -\phi^{n+1}_i(x))\\
&&\qquad-\sum_{i,j=1}^3 \sum_{x\in\Omega_h} \frac{ u^n_j(x)-u^n_j(x-he^j)}{h}u^{n+\2}_i(x)\phi^{n+1}_i(x) )h^3\\
&&=-\sum_{i,j=1}^3 \sum_{x\in\Omega_h} (u^n_j(x)u^{n+\2}_i(x)D_j^+\phi^{n+1}_i(x)  + u^n_j(x)u^{n+\2}_i(x+he^j)D_j^+\phi^{n+1}_i(x)  )h^3\\
&&=-\sum_{j=1}^3\Big\{\Big(u^n_j(\cdot)u^{n+\2}(\cdot),D_j^+\phi^{n+1}(\cdot) \Big)_{\Omega_h} + \Big(u^n_j(\cdot)u^{n+\2}(\cdot+he^j),D_j^+\phi^{n+1}(\cdot)  \Big)_{\Omega_h}\Big\}.
\end{eqnarray*} 
Hence, taking summation in \eqref{weak-form} with respect to $n$ and noting that $\phi^{T_\tau}=0$, we have 
\begin{eqnarray*}
&&0=(u^0,\phi^0)_{\Omega_h}+\sum_{n=0}^{T_\tau-1}  (u^n,\frac{\phi^{n+1}-\phi^n}{\tau})_{\Omega_h}\tau   \\\nonumber
&&\quad+\sum_{j=1}^3 \sum_{n=0}^{T_\tau-1}\Big\{(u^n_j(\cdot)u^{n+\2}(\cdot),D_j^+\phi^{n+1}(\cdot) )_{\Omega_h} + (u^n_j(\cdot)u^{n+\2}(\cdot+he^j),D_j^+\phi^{n+1}(\cdot)  )_{\Omega_h}\Big\}\frac{\tau}{2}\\\nonumber
&&\quad- \sum_{j=1}^3\sum_{n=0}^{T_\tau-1}(D^+_ju^{n+\2},D^+_j\phi^{n+1})_{\Omega_h}\tau +\sum_{n=0}^{T_\tau-1}( f^{n+1},\phi^{n+1})_{\Omega_h}\tau +O(h^\alpha).
\end{eqnarray*} 
Therefore, noting \eqref{412} and \eqref{413}, we obtain 
\begin{eqnarray}\label{weak-form2}
\quad0&\!\!\!=\!\!\!&
\underline{\int_\Omega u_\delta(0,x)\cdot \phi(0,x)dx }_{R_1}  + 
\underline{\int_0^T\int_\Omega u_\delta(t,x)\cdot\partial_t\phi(t,x)dxdt }_{R_2}    \\\nonumber
&& + \underline{\sum_{j=1}^3\frac{1}{2} \int_0^T\int_\Omega \Big\{ u_{\delta j}(t,x) v_{\delta}(t,x)\cdot \partial_{x_j}\phi(t,x) } \\\nonumber
&&\underline{ + u_{\delta j}(t,x)v_{\delta }(t,x+he^j)\cdot\partial_{x_j}\phi(t,x)\Big\}dxdt  }_{R_3}\\\nonumber
&&- \underline{ \sum_{j=1}^3\int_0^T\int_\Omega w^j_\delta(t,x)\cdot \partial_{x_j}\phi(t,x)dxdt }_{R_4}+
\underline{ \int_0^T\int_\Omega f_\delta(t,x)\cdot\phi(t,x)dxdt }_{R_5}\\\nonumber
&&+O(h^\alpha)+O(h),
\end{eqnarray} 
where $f_\delta:[0,T]\times\Omega\to\R^3$ is the step function defined by $f^{n+1}:\Omega\to\R^3$, $n=0,1,\ldots,T_\tau-1$ as
\begin{eqnarray*}
 f_\delta(t,x):=\left\{\!\!\!\!\!\!
\begin{array}{lll}
&f^{n+1}(y)\mbox{\quad for  $(t,x)\in (\tau n,\tau n+\tau]\times C^+_h(y)$, $y\in\Omega_h$},
\medskip\\
&0\mbox{\quad\,\,\,\,\,\,\,\,\,\,\,\,\,\,\, otherwise}.
\end{array}
\right. \\
\end{eqnarray*}
\begin{Thm}
The limit $v$ of $\{u_\delta\}$ and $\{v_\delta\}$ derived with \eqref{scaling} is a Leray-Hopf weak solution of (\ref{NS}).
\end{Thm}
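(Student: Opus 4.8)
The plan is to let $\delta=(h,\tau)\to 0$ along the subsequence produced in Theorem~\ref{weak convergence} (under the scaling \eqref{scaling}, so that Theorem~\ref{strong-convergence} applies) in the discrete weak form \eqref{weak-form2}, and to identify the five terms $R_1,\dots,R_5$ with the corresponding terms of \eqref{weak-form-NS}. The linear terms $R_2$, $R_4$, $R_5$ converge directly from Section~5: testing the weak convergences \eqref{511} and \eqref{523} against $\partial_t\phi$ and $\partial_{x_j}\phi$, which belong to $L^2([0,T];L^2(\Omega)^3)$, yields $R_2\to\int_0^T\int_\Omega v\cdot\partial_t\phi\,dxdt$ and $R_4\to\sum_{j=1}^3\int_0^T\int_\Omega\partial_{x_j}v\cdot\partial_{x_j}\phi\,dxdt$, while the step function $f_\delta$ converges to $f$ strongly in $L^2([0,T];L^2(\Omega)^3)$ (elementary $L^2$-approximation by space--time cell averages), so $R_5\to\int_0^T\int_\Omega f\cdot\phi\,dxdt$. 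For $R_1$ I would show $u_\delta(0,\cdot)\to v^0$ strongly in $L^2(\Omega)^3$: the cell average $\tilde u^0$ converges to $v^0$ in $L^2$, and since $\tilde u^0$ vanishes on $\partial\Omega_h$ estimate \eqref{242424242} of Theorem~\ref{22estimate} gives $\norm u^0-\tilde u^0\norm_{\Omega_h}\le\sqrt{A}\,\norm\D\cdot\tilde u^0\norm_{\Omega_h}$, whose right-hand side tends to $0$ because $\nabla\cdot v^0=0$ (immediate for smooth divergence-free data, where $\D\cdot\tilde u^0=O(h)$, then for general $v^0\in L^2_\sigma(\Omega)$ by density together with $\norm P_hu\norm_{\Omega_h}\le\norm u\norm_{\Omega_h}$ from Theorem~\ref{22estimate}); hence $R_1\to\int_\Omega v^0(x)\cdot\phi(0,x)\,dx$.

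The essential step is the nonlinear term $R_3$, and this is exactly where the strong convergence of $\{v_\delta\}$ from Theorem~\ref{strong-convergence} is needed. Because $\sum_m\big(\sum_{j=1}^3\norm D_j^+u^{m+\2}\norm_{\Omega_h}^2\big)\tau$ is bounded by \eqref{413} (together with \eqref{412}), the family $\{w^j_\delta\}$ is bounded in $L^2([0,T];L^2(\Omega)^3)$, and the spatial shift satisfies $\norm v_\delta(\cdot,\cdot+he^j)-v_\delta\norm_{L^2([0,T];L^2(\Omega)^3)}=h\,\norm w^j_\delta\norm_{L^2([0,T];L^2(\Omega)^3)}=O(h)$; thus in $R_3$ one may replace $v_\delta(t,x+he^j)$ by $v_\delta(t,x)$ up to $O(h)$, leaving $\sum_{j=1}^3\int_0^T\int_\Omega u_{\delta j}\,v_\delta\cdot\partial_{x_j}\phi\,dxdt$. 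Since $u_{\delta j}\wto v_j$ weakly in $L^2([0,T];L^2(\Omega))$ while $v_\delta\to v$ strongly in $L^2([0,T];L^2(\Omega)^3)$, the splitting $u_{\delta j}v_\delta=u_{\delta j}(v_\delta-v)+u_{\delta j}v$ together with $\partial_{x_j}\phi\in L^\infty([0,T]\times\Omega)$ (so the first summand is $O(\norm u_{\delta j}\norm_{L^2}\norm v_\delta-v\norm_{L^2})\to0$) and $v\,\partial_{x_j}\phi\in L^2([0,T];L^2(\Omega)^3)$ (so the second converges by weak convergence) gives $R_3\to\sum_{j=1}^3\int_0^T\int_\Omega v_j\,v\cdot\partial_{x_j}\phi\,dxdt$, which by the identity recorded just after \eqref{weak-form-NS} equals $-\sum_{j=1}^3\int_0^T\int_\Omega v_j\,\partial_{x_j}v\cdot\phi\,dxdt$.

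Assembling these limits in \eqref{weak-form2} and using $O(h^\alpha)+O(h)\to0$ produces exactly the identity \eqref{weak-form-NS} for every $\phi\in C^\infty_0([-1,T);C^\infty_{0,\sigma}(\Omega))$. The regularity required of $v$ is already in hand: $v\in L^2([0,T];H^1_{0,\sigma}(\Omega))$ by Theorem~\ref{weak convergence}, and the uniform bound \eqref{412} on $\norm u^{n+\2}\norm_{\Omega_h}$ passes to the limit by lower semicontinuity of the norm under weak-$*$ convergence in $L^\infty([0,T];L^2(\Omega)^3)$, giving $v\in L^\infty([0,T];L^2(\Omega)^3)$; combined with $v(t,\cdot)\in H^1_{0,\sigma}(\Omega)\subset L^2_\sigma(\Omega)$ for a.e.\ $t$, this yields $v\in L^\infty([0,T];L^2_\sigma(\Omega))$, so $v$ is a Leray-Hopf weak solution of \eqref{NS}. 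The only delicate point is $R_3$ --- the spatial-shift estimate and the weak-times-strong product passage --- since all other terms are linear and converge at once from Section~5; it is precisely the mixed $u^n\!\cdot\!u^{n+\2}$ form of the convective term in the scheme that makes strong convergence of $v_\delta$ alone, without strong convergence of $u_\delta$, sufficient here.
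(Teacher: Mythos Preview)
Your proposal is correct and follows essentially the same structure as the paper's proof: pass to the limit in the discrete weak form \eqref{weak-form2}, handling $R_2$, $R_4$ by weak convergence, $R_1$, $R_5$ by density/cell-average approximation, and $R_3$ by the weak-times-strong product splitting made possible by Theorem~\ref{strong-convergence}. Your treatment of the spatial shift in $R_3$ via the identity $v_\delta(\cdot,\cdot+he^j)-v_\delta=h\,w^j_\delta$ is slightly more direct than the paper's (which routes the shift through continuity of translation on the limit $v$), and your $L^\infty$-bound via weak-$\ast$ lower semicontinuity replaces the paper's a.e.-subsequence argument, but these are cosmetic variations within the same overall scheme.
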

\begin{proof}
We already proved that $v\in L^2([0,T];H^1_{0,\sigma}(\Omega))$ in Section 5. Since  $v_\delta$ converges to $v$ strongly in $L^2([0,T];L^2(\Omega)^3)$ as $\delta\to0$ and $v_\delta$ belongs to $L^\infty([0,T];L^2(\Omega)^3)$ with $ \norm v_\delta(t,\cdot)\norm_{L^2(\Omega)^3}$ bounded independently from $t\in[0,T]$ and $\delta$, 
we see that $v\in  L^\infty([0,T];L^2_{\sigma}(\Omega))$ by taking an a.e. pointwise convergent subsequence of $\{v_\delta\}$.

We show that \eqref{weak-form2} yields \eqref{weak-form-NS} as the limit of $\delta\to0$. It follows from Theorem \ref{weak convergence} that 
$$R_2\to \int_0^T\int_\Omega v\cdot \partial_t\phi \,dxdt,\quad R_4\to \sum_{j=1}^3\int_0^T\int_\Omega \partial_{x_j}v\cdot\partial_{x_j}\phi\, dxdt\mbox{\quad as $\delta\to0$}.$$
With  \eqref{412},  \eqref{413},  Theorem \ref{weak convergence} and Theorem \ref{strong-convergence},  we have 
\begin{eqnarray*}
&&\!\!\!\!\!\!\!\Big|\int_0^T\int_\Omega u_{\delta j}(t,x) v_{\delta}(t,x+he^j)\cdot \partial_{x_j}\phi(t,x) -v_{ j}(t,x) v(t,x)\cdot \partial_{x_j}\phi(t,x)  dxdt\Big|\\
&&\!\!\!\!\!\!\!=\Big|\int_0^T\int_\Omega \Big\{    u_{\delta j}(t,x)( v(t,x) \cdot\partial_{x_j}\phi(t,x)) -v_j(t,x)( v(t,x) \cdot\partial_{x_j}\phi(t,x))\\
&&\!\!\!\!\!\!\!\quad+  u_{\delta j}(t,x)\big(v_\delta(t,x+he^j)-v(t,x+he^j)\big)\cdot \partial_{x_j}\phi(t,x) \\
&&\!\!\!\!\!\!\!\quad+    u_{\delta j}(t,x)\big(v(t,x+he^j)-v(t,x)\big)\cdot \partial_{x_j}\phi(t,x)\Big\}  dxdt\Big|\\
&&\!\!\!\!\!\!\!   \le    
\Big|\int_0^T\int_\Omega \Big\{    u_{\delta j}(t,x)( v(t,x) \cdot\partial_{x_j}\phi(t,x)) -v_j(t,x)( v(t,x) \cdot\partial_{x_j}\phi(t,x))dxdt\Big|\\
&&\!\!\!\!\!\!\!\quad+ \norm u_{\delta j}\partial_{x_j}\phi\norm_{L^2([0,T];L^2(\Omega))}\norm v_{\delta }-v \norm_{L^2([0,T];L^2(\Omega)^3)}\\
&&\!\!\!\!\!\!\!\quad+  \norm u_{\delta j}\partial_{x_j}\phi\norm_{L^2([0,T];L^2(\Omega))}\norm v(\cdot,\cdot-he^j)-v(\cdot,\cdot) \norm_{L^2([0,T];L^2(\Omega)^3)}\\
&&\!\!\!\!\!\!\!\to0\mbox{\qquad as $\delta\to0$}.
\end{eqnarray*}
Taking care of the first term in $R_3$ in the same way,  we obtain
\begin{eqnarray*}
&&R_3\to \sum_{j=1}^3 \int_0^T\int_\Omega    v_{ j}(t,x)v(t,x) \cdot  \partial_{x_j}\phi(t,x)    dxdt\\
&&\qquad\qquad =-\sum_{j=1}^3\int_0^T\int_\Omega v_j(t,x)\partial_{x_j}v(t,x)\cdot\phi(t,x)dxdt   \mbox{ \quad as $\delta\to0$.}
\end{eqnarray*} 
To examine $R_1$, we take a sequence $\{v^{0l}\}_{l\in\N}\subset C^{\infty}_{0,\sigma}(\Omega)$ that converges to $v^0$ in $L^2(\Omega)^3$ as $l\to\infty$. For each $\delta=(h,\tau)$, define $u^{0l},\tilde{u}^{0l}:\Omega_h\to\R^3$ and $u^{0l}_h,\tilde{u}^{0l}_h:\Omega\to\R^3$ as  
\begin{eqnarray*}
&&\tilde{u}^{0l}(y):=\frac{1}{h^3}\int_{C_h(y)}v^{0l}(z)dz\mbox{ for $y\in\Omega_h$,\quad}\\
&& \tilde{u}^{0l}_h(x):=\left\{\!\!\!\!\!\!
\begin{array}{lll}
&\tilde{u}^{0l}(y)\mbox{\quad for  $x\in {C^+_h(y)}$, $y\in \Omega_h$},
\medskip\\
&0\mbox{\quad\,\,\,\,\,\,\,\,\,\, otherwise},
\end{array}
\right. \\
&&u^{0l}:=P_h\tilde{u}^{0l},\\
 &&u^{0l}_h(x):=\left\{\!\!\!\!\!\!
\begin{array}{lll}
&u^{0l}(y)\mbox{\quad for  $x\in  C_h^+(y)$, $y\in\Omega_h$},
\medskip\\
&0\mbox{\quad\,\,\,\,\,\,\,\,\,\, otherwise}.
\end{array}
\right. \\\end{eqnarray*}
Then, we have 
\begin{eqnarray*}
&&\Big|  \int_\Omega u_\delta(0,x)\cdot\phi(0,x) -v^0(x)\cdot\phi(0,x)dx  \Big|\le   \big\{ \norm u_\delta(0,x)-u^{0l}_h \norm_{L^2(\Omega)^3}\\
&&\quad +\norm u^{0l}_h-\tilde{u}^{0l}_h   \norm_{L^2(\Omega)^3}+\norm \tilde{u}^{0l}_h-v^{0l}  \norm_{L^2(\Omega)^3}+\norm v^{0l}-v^0  \norm_{L^2(\Omega)^3}\big\} \norm\phi(0,\cdot)\norm_{L^2(\Omega)^3}\\
&&=\big\{ \norm P_h\tilde{u}^0-P_h\tilde{u}^{0l} \norm_{\Omega_h}+\norm P_h\tilde{u}^{0l}-\tilde{u}^{0l}   \norm_{\Omega_h}+\norm \tilde{u}^{0l}_h-v^{0l}  \norm_{L^2(\Omega)^3}\\
&&\quad+\norm v^{0l}-v^0  \norm_{L^2(\Omega)^3}\big\} \norm\phi(0,\cdot)\norm_{L^2(\Omega)^3}.
\end{eqnarray*}
With Theorem \ref{22estimate}, we have
\begin{eqnarray*}
&&\norm P_h\tilde{u}^0-P_h\tilde{u}^{0l}\norm_{\Omega_h}^2\le \norm \tilde{u}^0-\tilde{u}^{0l}\norm_{\Omega_h}^2=\sum_{y\in\Omega_h}h^{-3}\Big|     \int_{C_h(y)}v^0(z)dz-\int_{C_h(y)}v^{0l}(z)dz   \Big|^2\\
&&\le \sum_{y\in\Omega_h}h^{-3}\Big\{  \sqrt{ \int_{C_h(y)}|v^0(z)-v^{0l}(z)|^2dz}\sqrt{ \int_{C_h(y)}1dz}   \Big\}^2\le\norm v^0-v^{0l}\norm_{L^2(\Omega)^3}^2.
\end{eqnarray*}
For any $\ep>0$, we fix $l$ in such a way that $\norm v^0-v^{0l}\norm_{L^2(\Omega)^3}<\ep$. Since $v^{0l}$ belongs to $C^{\infty}_{0,\sigma}(\Omega)$, we have $\norm \tilde{u}^{0l}_h-v^{0l}\norm_{L^2(\Omega)^3}\to0$ as $h\to0$; Noting that $\tilde{u}^{0l}\equiv0$ on some neighborhood of $\partial\Omega$,   we see  that 
\eqref{242424242} is applicable  for sufficiently small $h$ to obtain 
\begin{eqnarray*}
\norm P_h\tilde{u}^{0l}-\tilde{u}^{0l}\norm_{\Omega_h}^2 &\le& A\sum_{y\in\Omega_h}| \D\cdot \tilde{u}^{0l}(y)|^2\\
&\le& A\sum_{y\in\Omega_h}\Big|   \sum_{j=1}^3 h^{-3}\int_{C_h(y)}\frac{v^{0l}_j(z)-v^{0l}_j(z-he^j)}{h}dz \Big|^2 h^3\\
&=&A\sum_{y\in\Omega_h}\Big|   h^{-3}\int_{C_h(y)}  \nabla\cdot v^{0l}(z)+O(h) dz \Big|^2h^3\\
&=&O(h^2)\to0\mbox{\quad as $h\to0$}.
\end{eqnarray*}
Therefore, we conclude that 
$$R_1\to\int_\Omega v^0(x)\cdot\phi(0,x)dx\mbox{\quad as $\delta\to0$.}$$
Take $\{f^l\}_{l\in\N}\subset C^\infty_0([0,T]\times\Omega)^3$ that converges to  $f$ in $L^2([0,T];L^2(\Omega)^3)$ as $l\to\infty$.  For each $\delta=(h,\tau)$, define  $f^{l,n+1}:\Omega_h\to\R^3$, $n=0,1,\ldots,T_\tau-1$  and $f^{n+1}_\delta:[0,T]\times\Omega\to\R^3$ as  
\begin{eqnarray*}
f^{l,n+1}(y)&:=&\frac{1}{\tau h^3}\int_{\tau n}^{\tau(n+1)}\int_{C_h(y)} f^l(s,z)dzds\mbox{ for $y\in\Omega_h$,}
\\
 f^{l}_\delta(t,x)&:=&\left\{\!\!\!\!\!\!
\begin{array}{lll}
&f^{l,n+1}(y)\mbox{\quad for  $(t,x)\in (\tau n,\tau n+\tau]\times C_h^+(y)$, $y\in\Omega_h$},
\medskip\\
&0\mbox{\quad\,\,\,\,\,\,\,\,\,\,\,\,\,\,\, \,\,\,\,otherwise}.
\end{array}
\right. \\
\end{eqnarray*}
Then, we see that 
\begin{eqnarray*}
&&\Big|  \int_0^T\int_\Omega f_\delta(t,x)\cdot\phi(t,x)-f(t,x)\cdot\phi(t,x) dxdt   \Big|\\
&&\qquad\qquad \le \norm \phi \norm_{L^2([0,T];L^2(\Omega)^3)}  \norm f_\delta-f \norm_{L^2([0,T];L^2(\Omega)^3)},\\
&&\norm f_\delta-f \norm_{L^2([0,T];L^2(\Omega)^3)}\le \norm f_\delta-f^l_\delta \norm_{L^2([0,T];L^2(\Omega)^3)} +\norm f^l_\delta-f^l \norm_{L^2([0,T];L^2(\Omega)^3)}\\
&&\qquad\qquad +\norm f^l-f \norm_{L^2([0,T];L^2(\Omega)^3)},\\
&&\norm f_\delta-f^l_\delta \norm_{L^2([0,T];L^2(\Omega)^3)}^2 \le \sum_{n=0}^{T_\tau-1}\sum_{y\in\Omega_h}\Big| \frac{1}{\tau h^3} \int_{\tau n}^{\tau(n+1)}\int_{C_h(y)}f(s,z)-f^l(s,z)dxds  \Big|^2 \tau h^3\\
&&\qquad\qquad\le\norm f^l-f\norm_{L^2([0,T];L^2(\Omega)^3)}^2. 
\end{eqnarray*}
 For any $\ep>0$, we fix $l$ in such a way that $\norm f^l-f\norm_{L^2([0,T];L^2(\Omega)^3)}<\ep$. Since $f^l$ is $C^\infty$, we have $\norm f^l_\delta-f^l \norm_{L^2([0,T];L^2(\Omega)^3)}\to0$ as $\delta\to0$. Therefore, we conclude that 
 $$R_5\to\int_0^T\int_\Omega f(t,x)\cdot\phi(t,x)dxdt\mbox{\quad as $\delta\to0$.}$$ 
\end{proof}
\appendix
\def\thesection{Appendix}
\section{}

\noindent {\bf(1) Lipschitz interpolation of step function with $0$-boundary condition.}

 {\it  For a function $u:\Omega_h\to\R$ with $u|_{\partial\Omega_h}=0$ and the step function $v$ defined as 
\begin{eqnarray*}
v(x)&:=&\left\{
\begin{array}{lll}
&u(y)\mbox{\quad for $x\in {C_{h}^+(y)}$, $y\in \Omega_{h}$},
\medskip\\
&0\mbox{\quad \,\,\,\,\,\,\,\,\mbox{otherwise}}, 
\end{array}
\right. \\
\end{eqnarray*}
there exists  a Lipschitz continuous function $w:\Omega \to \R$ with a compact support  such that 
\begin{eqnarray*}
&&\norm w-v\norm_{L^2(\Omega)}\le K h\norm \D u\norm_{\Omega_h},\\
&&\norm \partial_{x_i}w(x)\norm_{L^2(\Omega)}\le \tilde{K}\norm \D u\norm_{\Omega_h}, \mbox{ $i=1,2,3$},
\end{eqnarray*}
where $K$ and $\tilde{K}$ are constants independent of $u$ and $h$. }
\begin{proof} 
 Let $y=(y_1,y_2,y_3)\in\Omega_h$ and let $u=0$ outside $\Omega_h$. Define the functions
\begin{eqnarray*}
&&f_1(x_1):[y_1,y_1+h]\to\R,\quad \\
&&f_1(x_1):=u(y)+\frac{u(y+he^1)-u(y)}{h}(x_1-y_1);\\
&&f_2(x_1):[y_1,y_1+h]\to\R,\quad \\
&&f_2(x_1):=u(y+he^2)+\frac{u(y+he^2+he_1)-u(y+he^2)}{h}(x_1-y_1);\\
&&f_3(x_1,x_2):[y_1,y_1+h]\times[y_2,y_2+h]\to\R, \quad \\
&&f_3(x_1,x_2):= f_1(x_1)+\frac{f_2(x_1)-f_1(x_1)}{h}(x_2-y_2);\\
&&g_1(x_1):[y_1,y_1+h]\to\R,\quad \\
&&g_1(x_1):=u(y+he^3)+\frac{u(y+he^3+he^1)-u(y+he^3)}{h}(x_1-y_1);\\
&&g_2(x_1):[y_1,y_1+h]\to\R,\quad \\
&&g_2(x_1):=u(y+he^3+he^2)+\frac{u(y+he^3+he^2+he_1)-u(y+he^3+he^2)}{h}(x_1-y_1);\\
&&g_3(x_1,x_2):[y_1,y_1+h]\times[y_2,y_2+h]\to\R, \quad \\
&&g_3(x_1,x_2):= g_1(x_1)+\frac{g_2(x_1)-g_1(x_1)}{h}(x_2-y_2);\\ 
&&w(x_1,x_2,x_3):C_h^+(y)\to\R,\\
&&w(x_1,x_2,x_3):=f_3(x_1,x_2)+\frac{g_3(x_1,x_2)-f_3(x_1,x_2)}{h}(x_3-y_3)
\end{eqnarray*}
Then, we see that 
\begin{eqnarray*}
&&\!\!\!\!\!w(x_1,x_2,x_3)= u(y)+D_1^+u(y)(x_1-y_1)+ D_2^+u(y)(x_2-y_2)+D_3^+u(y)(x_3-y_3)\\
&&\qquad+\{ D_1^+u(y+he^2) -  D_1^+u(y) \}\frac{(x_1-y_1)(x_2-y_2)}{h}\\
&&\qquad +\{ D_1^+u(y+he^3)  -D_1^+u(y)\} \frac{(x_1-y_1)(x_3-y_3)}{h}\\
&&\qquad + \{D_2^+u(y+he^3)-D_2^+u(y)\}\frac{(x_2-y_2)(x_3-y_3)}{h}\\
&&\qquad +\{ D_1^+u(y+he^2+he^3)-D_1^+u(y+he^3) \\
&&\qquad -D_1^+u(y+he^2)+D_1^+u(y)\}\frac{(x_1-y_1)(x_2-y_2)(x_3-y_3)}{h^2}. 
%
\end{eqnarray*}
It is clear that $w$ can be Lipschitz continuously connected with each other, yielding $w:\Omega\to\R$ with a compact support.   Moreover, it holds that  
\begin{eqnarray*}
\int_{C^+_h(y)}|w(x)-v(x)|^2dx &\le& (|\D u(y)|^2 +|\D u(y+he^2)|^2+|\D u(y+he^3)|^2\\
&&+|\D u(y+he^2+he^3)|^2 )O(h^2) h^3,\\
\int_{C^+_h(y)}|\partial_{x_i}w(x)|^2dx &\le& 9(|D_i^+u(y)|^2+|\D u(y)|^2 +|\D u(y+he^2)|^2+|\D u(y+he^3)|^2\\
&&+|\D u(y+he^2+he^3)|^2 ) h^3.
\end{eqnarray*}
This concludes the assertion. 
\end{proof}
\medskip
\medskip

%
 %

\noindent {\bf (2) Discrete Poincar\'e type inequality.}

{\it There exists a constant $A>0$ depending only on $\Omega$ for which each function $\phi:\Omega_h\to\R$ with $\phi|_{\partial\Omega_h}=0$ satisfies 
\begin{eqnarray*}
\sum_{x\in\Omega_h} |\phi(x)|^2 \le A \sum_{x\in\Omega_h\setminus\partial\Omega_h} |D_1^+ \phi(x)|^2\le A\sum_{x\in\Omega_h\setminus\partial\Omega_h} |\D \phi(x)|^2.
\end{eqnarray*}}
\begin{proof} For each $x=(x_1,x_2,x_3)\in\Omega_h\setminus\partial\Omega_h$, there exists $J\in\N$ such that $x+hje^1\in\Omega_h\setminus\partial\Omega_h$ for $j=1,\ldots,J-1$ and $x+hJe^1\in\partial\Omega_h$. Since $\Omega$ is bounded, we have $J^\ast$ and $a>0$ such that $J\le J^\ast$ and $hJ^\ast\le a$ for all $x$, i.e., $a$ is the diameter of $\Omega$ in the $x_1$-direction. Observe that
\begin{eqnarray}\label{app2}\nonumber
(\ast)\qquad\qquad\qquad\qquad\phi(x_1,x_2,x_3)&=&-\sum_{j=0}^{J-1} D_1^+\phi(x_1+hj,x_2,x_3)h,\\\nonumber
|\phi(x_1,x_2,x_3)|^2&\le& \Big\{ \sum_{j=0}^{J-1} |D^+_1\phi(x_1+hj,x_2,x_3)| h \Big\}^2\\\nonumber
&\le&\Big\{ \Big( \sum_{j=0}^{J-1} |D^+_1\phi(x_1+hj,x_2,x_3)|^2 h\Big)^\2\Big( \sum_{j=0}^{J-1}h\Big)^\2 \Big\}^2\\\nonumber
&\le& a  \sum_{j=0}^{J-1} |D_1^+\phi(x_1+hj,x_2,x_3)|^2h,\\\nonumber
&\le& a \sum_{y_1\in\{y_1\,|\,(y_1,x_2,x_3)\in\Omega_h\setminus\partial\Omega_h\}} |D_1^+\phi(y_1,x_2,x_3)|^2h,\\\nonumber
\sum_{y_1\in\{y_1\,|\,(y_1,x_2,x_3)\in\Omega_h\setminus\partial\Omega_h\}}
\!\!\!\!\!\!\!\!\!\!\!|\phi(y_1,x_2,x_3)|^2&\le& \Big(a  \sum_{y_1\in\{y_1\,|\,(y_1,x_2,x_3)\in\Omega_h\setminus\partial\Omega_h\}}\!\!\!\!\!\!\!\!\!\!\! |D_1^+\phi(y_1,x_2,x_3)|^2h\Big)J^\ast \\\nonumber
&\le& a^2  \sum_{y_1\in\{y_1\,|\,(y_1,x_2,x_3)\in\Omega_h\setminus\partial\Omega_h\}}\!\!\!\!\!\!\!\!\!\!\! |D_1^+\phi(y_1,x_2,x_3)|^2
\end{eqnarray} 
Therefore, taking summation of the last inequality with respect to $(x_2,x_3)$ such that there exists at least one $y_1$ satisfying $(y_1,x_2,x_3)\in\Omega_h\setminus\partial\Omega_h$, we obtain our conclusion. \end{proof}
\end{document}